\documentclass[12pt]{amsart}
\usepackage{amssymb,amsmath,amsthm}

\oddsidemargin=-.0cm
\evensidemargin=-.0cm
\textwidth=16cm
\textheight=22cm
\topmargin=0cm
\usepackage[normalem]{ulem}




\def\e{{\rm e}}
\def\cic{\mathbf}
\def\eps{\varepsilon}

\def\d{{\rm d}}

\def\R {\mathbb{R}}

\def\V {{\mathsf V}}

\def\D {{\mathcal D}}

\def\F {{\mathcal F}}
\def\sh {{\mathsf{sh}}}

\def\S{{\mathbf S}}
\def\size{{\mathrm{size}}}
\def\tops{{\mathrm{tops}}}
\def\T{{\mathbf{T}}}

\def\Q {{\mathsf Q}}
\def\tr {{\mathsf \Lambda}}
\def\M {{\mathsf M}}

\def\Z {{\mathbb Z}}
\def \l {\langle}
\def \r {\rangle}

\def \and{\qquad\text{and}\qquad}



\def \no#1#2#3 {{\bf #1} (#3), #2.}
\def \eds#1#2#3 {#1, #2, #3.}
\newtheorem{proposition}{Proposition}
\newtheorem{theorem} {Theorem}

\newtheorem{lemma}[proposition]{Lemma}
\theoremstyle{definition}

\newtheorem{remark}[proposition]{Remark}

\newtheorem{conjecture}[theorem]{Conjecture}

\numberwithin{proposition}{section}
\numberwithin{equation}{section}


\title[Endpoint bounds for the quartile operator]{Endpoint bounds for the quartile operator}

\author[C.\ Demeter]
{Ciprian Demeter}
\address{
Dept.\ of Mathematics
\newline\indent
Indiana University
\newline\indent
Bloomington, IN 47405 - USA}
\email{demeterc@indiana.edu {\rm (C.\ Demeter)} }

\author[F.\ Di Plinio]{Francesco Di Plinio}
\address{INdAM - Cofund Marie Curie Fellow at Dipartimento di Matematica, \newline \indent Universit\`a degli Studi di Roma ``Tor Vergata'', \newline  \indent Via della Ricerca Scientifica,   00133 Roma,  Italy   \newline \indent \centerline{and}   \indent
  The Institute for Scientific Computing and Applied Mathematics,
\newline\indent
Indiana University
\newline\indent
831 East Third Street, Bloomington, Indiana  47405, U.S.A. }
\email{diplinio@mat.uniroma2.it {\rm (F.\ Di Plinio)} }

\subjclass{42B20}
 \keywords{Endpoint results, bilinear Hilbert transform, multi-frequency Calder\'on-Zygmund decomposition}
\thanks{The first author is partially supported by a Sloan Research Fellowship and by NSF Grant DMS-0901208. The second author is an INdAM - Cofund Marie Curie Fellow and is  partially
supported by the National Science Foundation under the grant
   NSF-DMS-1206438, and by the Research Fund of Indiana University.}

\begin{document}

\begin{abstract} It is a result by Lacey and Thiele \cite{LT1,LT2} that the bilinear Hilbert transform maps $L^{p_1}(\R) \times L^{p_2}(\R) $ into $L^{p_3}(\R)$   whenever $(p_1,p_2,p_3)$ is a H\"older tuple with  $p_1,p_2 >  1 $ and $p_3>\frac23$. We study the behavior of the quartile operator, which is the Walsh model for the bilinear Hilbert transform, when $p_3=\frac23$. We show that the quartile operator maps  $L^{p_1}(\R) \times L^{p_2}(\R) $ into $L^{\frac23,\infty}(\R)$ when $p_1,p_2>1$ and one component is restricted to subindicator functions. As a corollary, we derive that the quartile operator maps $L^{p_1}(\R) \times L^{p_2,\frac23}(\R) $ into $L^{\frac23,\infty}(\R)$. We also provide weak type estimates and boundedness on Orlicz-Lorentz spaces near $p_1=1,p_2=2$ which improve, in the Walsh case,   the results of \cite{BG}, \cite{CGMS}. Our main tool is the multi-frequency Calder\'on-Zygmund decomposition from \cite{NOT}.
\end{abstract}
\maketitle

\section{The Walsh phase plane and the quartile operator}
\subsection{The Walsh phase plane}  Let
$$
W_0(x) = \cic{1}_{[0,1]}(x), \qquad W_n(x)= \prod_{k\geq 0} \big(\mathrm{sign}\sin(2^{k }  2\pi x) \big)^{\eps_k}, \; n \geq 1,
$$
be the Walsh system on $[0,1]$. We have written    $n=\sum_{k\geq 0} \eps_k 2^k$ in binary representation,   that is, $\eps_k\in \{0,1\}$. The system $\{W_{n}:n\geq 0\}$ is an orthonormal basis of $L^2(0,1)$.

We will denote by $\D$ the standard dyadic grid on $\R$,
and  write $\ell(I)$ for the left endpoint of a dyadic interval $I$. A \emph{tile} $s=I_s \times \omega_s \subset \R \times \R_+$ is a dyadic rectangle of area 1, that is, $I_s$ and $\omega_s$ both belong to $\D$ and   $|I_s||\omega_s|=1$ . We define the corresponding Walsh wave packet
$$
w_s(x) = \mathrm{Dil}^{2}_{|I_s|} \mathrm{Tr}_{\ell(I_s)} W_{n_s} (x)=|I_s|^{-1/2}W_{n_s}\Big(\frac{x-\ell(I_s)}{|I_s|}\Big), \qquad n_s:= |I_s| \ell(\omega_s).
$$
Let $\mathbf{Ti}$ be the set of all tiles. It is immediate to see that $\|w_s\|_2=1$ for all $s \in \mathbf{Ti}$. Also, the following fundamental orthogonality property holds:
\begin{equation}
\label{walshorth}
s \cap s' = \emptyset\implies \l w_s, w_{s'} \r = 0.
\end{equation}
  A \emph{quartile} $s=I_{s} \times \omega_{s}$ is a dyadic rectangle with $|\omega_s|= 4|I_s|^{-1}$.
 We think of $\omega_s$ as the union of its dyadic grandchildren $\omega_{s_j}$, $j=1,\ldots,4$, with $\ell(\omega_{s_1})< \cdots <\ell(\omega_{s_4})$, and  denote by $s_j$ the tiles $I_{s} \times \omega_{s_j}$, which we call \emph{frequency grandchildren} of $s$. The set of all quartiles will be denoted by $\mathbf{Qt}$.

 \subsection{The quartile operator}We define the quartile operator
$$
\V_\S (f_1,f_2)(x) = \sum_{s \in \S} \frac{1}{\sqrt{|I_s|}} \l f_1, w_{s_1} \r \l f_2, w_{s_2} \r w_{s_3}(x),
$$
and the associated trilinear form
\begin{equation} \label{trf}
\tr_\S(f_1,f_2,f_3) = \l\V_\S (f_1,f_2), f_3 \r,
\end{equation}
where $\S \subset \mathbf{Qt}$ is an arbitrary collection of quartiles. The quartile operator has been introduced in  \cite{Th1} (see also \cite{Ththesis}) as a discrete model for the bilinear Hilbert transform
\begin{equation}
\label{BHT}
\mathsf{BHT}(f_1,f_2)(x) = \mathrm{p.v.} \int_{\R} f_1(x-t) f_2(x+t) \,\frac{\d t}{t}.
\end{equation}
Hereafter, $f^*:\R^+ \to [0,\infty)$ indicates the decreasing rearrangement of $f$, and  for $0 <p,q\leq \infty$, $p\not=\infty$, we denote by $L^{p,q}(\R)$ the Lorentz space with norm (or quasi-norm if either $p$ or $q$ are less than 1) 
$$
\|f\|_{ {p,q} } :=   \Big\| {t^\frac{1}{p}} f^*(t)\Big\|_{L^q\left(\R_+, \frac{\d t}{t}\right)}.
 $$
Let $(p_1,p_2,p_3) \in (0,\infty]^3$ be a H\"older tuple of exponents, that is
$$
\textstyle \frac{1}{p_1} + \frac{1}{p_2} = \frac{1}{p_3}.
$$
 A dilation-invariance argument shows that if $p_i, q_i \in (0,\infty],$ $i=1,2,3$ are tuples of exponents such that
\begin{equation}
 \label{BHTbd}\mathsf{BHT}: L^{p_1,q_1}(\R) \times L^{p_2,q_2}(\R) \to L^{p_3,q_3}(\R)
\end{equation}
then necessarily  $(p_1,p_2,p_3)$ is a H\"older tuple.   
It is a celebrated result by Lacey and Thiele \cite{LT1,LT2} that
\begin{equation}
 \label{BHTbdnonLor}
\mathsf{BHT}: L^{p_1}(\R) \times L^{p_2}(\R) \to L^{p_3}(\R)
\end{equation}
whenever $(p_1,p_2,p_3)$ is a H\"older tuple with  $p_1,p_2 >  1 $ and $p_3>\frac23$. Their proof uses the fact that $\mathsf{BHT}$ is an average of Fourier analogues of  $\V_\mathbf{Qt}$ (where the Walsh wave packets are replaced by certain ``Fourier wave packets") that are  uniformly bounded. The analysis of Walsh model sums is technically simpler than the one of their Fourier counterparts (in part due to \eqref{walshorth}), but it generally preserves the main conceptual difficulties from the Fourier world. Due to this, the ``Walsh case" is the ideal scenario for conveying new ideas in the most transparent way.

The quartile operator shares the same H\"older scaling property   exhibited by $\mathsf{BHT}$ and is known to be bounded in the same range of exponents, \cite{Th1}.
 A counterexample by Lacey and Thiele \cite{LT2}, see \cite{Lac} for details,  shows that  having $p_3\geq\frac23$ is necessary  in order for bounds of the type \eqref{BHTbd} to hold true for (the Fourier analogue of) $\V_\mathbf{Qt}$.

 The main purpose of the present paper is the investigation of the endpoint behavior of $\V_\mathbf{Qt}$. It is natural to conjecture
\begin{conjecture}For each $p_1,p_2 >1$ such that $\frac{1}{p_1} + \frac{1}{p_2} = \frac32$ we have
\label{conj}
\begin{equation}
\label{cjt}
\V_\mathbf{Qt}: L^{p_1}(\R) \times L^{p_2}(\R) \to L^{\frac23,\infty}(\R).
\end{equation}

\end{conjecture}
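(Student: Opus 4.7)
\textbf{Proof plan for Conjecture \ref{conj}.} The plan is to reduce \eqref{cjt} to a multilinear restricted weak--type endpoint estimate on three indicators, prove that restricted estimate by a tree/size decomposition built on the multi--frequency Calder\'on--Zygmund decomposition of \cite{NOT}, and finally upgrade to the strong $L^{p_1}\times L^{p_2}$ bound through a multilinear Marcinkiewicz--type interpolation with the Lacey--Thiele strong bounds at nearby H\"older tuples with $p_3>\tfrac23$. In more detail, I would first translate \eqref{cjt} into the following trilinear formulation: for every measurable $F\subset\R$ with $0<|F|<\infty$ and every $\lambda>0$ such that $\lambda\,|F|^{3/2}\ge\tfrac12\|\V_{\mathbf{Qt}}(\mathbf{1}_{E_1},\mathbf{1}_{E_2})\|_{\frac23,\infty}$, linearize with $g=\mathrm{sign}(\V_{\mathbf{Qt}}(\mathbf{1}_{E_1},\mathbf{1}_{E_2}))\,\mathbf{1}_F$ to reduce matters to the tri--restricted estimate
\[
\bigl|\tr_{\mathbf{Qt}}(\mathbf{1}_{E_1},\mathbf{1}_{E_2},\mathbf{1}_{F'})\bigr|\;\le\; C\,|E_1|^{\frac{1}{p_1}}|E_2|^{\frac{1}{p_2}}|F|^{-\frac12}
\]
on a major subset $F'\subset F$ of measure $\ge\tfrac12|F|$, valid for all triples on the line $\tfrac{1}{p_1}+\tfrac{1}{p_2}=\tfrac{3}{2}$, $p_1,p_2>1$.

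To establish this tri--restricted bound I would construct $F'$ by removing the standard exceptional set of uncentered maximal functions of $\mathbf{1}_{E_1},\mathbf{1}_{E_2},\mathbf{1}_F$ and, crucially, the bad set of the multi--frequency Calder\'on--Zygmund decomposition of $\mathbf{1}_F$ at the \emph{two} scales dictated by the densities of $E_1$ and $E_2$. One then partitions $\S=\mathbf{Qt}$ into forests indexed by density levels, organizes each forest into overlapping trees, and applies the by--now--standard Walsh size/energy estimates in the three phase--plane variables. The \textbf{sharp} use of \cite{NOT} is required for the $f_3=\mathbf{1}_{F'}$ slot, where one must bound a quantity of ``multi--frequency size'' by a power of the density $|F|/|I|$ that is strictly better than the trivial $L^2$ bound; this is precisely the gain that will be responsible for the negative power $|F|^{-1/2}$ on the right--hand side.

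Once the tri--restricted endpoint bound is in hand, I would interpolate it with the Lacey--Thiele strong bounds $L^{p_1+\eps_1}\times L^{p_2-\eps_2}\to L^{p_3(\eps)}$ (with $p_3(\eps)>\tfrac23$) at nearby H\"older tuples. By a multisublinear Marcinkiewicz--type interpolation for trilinear forms valued in the quasi--Banach Lorentz scale (in the spirit of Janson's and Grafakos--Tao's extensions of Hunt's theorem), the restricted weak--type estimates in an open wedge on the Hölder plane upgrade to the weak--type bound \eqref{cjt} on the full $L^{p_1}\times L^{p_2}$ target. The intermediate Lorentz corollary $L^{p_1}\times L^{p_2,\frac23}\to L^{\frac23,\infty}$ quoted in the abstract would drop out as the immediate outcome of only interpolating in the two non--endpoint slots.

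The hard step will be the tri--restricted bound itself, and specifically the simultaneous implementation of the multi--frequency Calder\'on--Zygmund decomposition across \emph{two} different density scales, corresponding to the fact that neither $E_1$ nor $E_2$ is singled out by a subindicator hypothesis. The Walsh orthogonality \eqref{walshorth} removes the Fourier--side oscillation issues, so the conceptual obstacle is genuinely the combinatorial one of bookkeeping trees against two competing density scales while keeping the multi--frequency gain. Overcoming this, rather than the interpolation step, is what separates the main theorem of this paper from Conjecture \ref{conj}.
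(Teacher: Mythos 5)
This statement is labeled a \emph{Conjecture} in the paper precisely because the authors could not prove it, and your proposal does not close the gap; the fatal step is the interpolation upgrade in the final paragraph. Starting from a tri--restricted weak--type bound at the endpoint tuple (or, as the paper actually proves in Proposition \ref{mainprop}, a stronger mixed--type estimate with $f_1$ fully general and $f_2,f_3$ subindicator) and interpolating with the known strong bounds at nearby H\"older tuples with $p_3 > \tfrac23$ does \emph{not} produce the unrestricted weak--type inequality \eqref{cjt}. Multilinear Marcinkiewicz/Hunt--type interpolation upgrades restricted weak type in an open wedge to strong type strictly in the interior; at the boundary line $p_3 = \tfrac23$ it only yields bounds whose source spaces are small Lorentz spaces, in this instance $L^{p_2,\frac23}$, which is exactly Theorem \ref{lorentzthm}. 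There is no theorem in the Janson or Grafakos--Tao framework that converts a restricted weak--type estimate \emph{at} the endpoint tuple into an unrestricted weak--type estimate at the same tuple merely by consulting strong bounds at neighboring tuples --- one can only move ``inward,'' not ``sideways at the boundary.'' Your own last sentence about the Lorentz corollary ``dropping out'' already shows that this machinery naturally terminates at $L^{p_2,\frac23}$ rather than $L^{p_2}$.

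The paper's concluding remarks make the obstruction explicit and it is combinatorial rather than interpolation--theoretic: when both $f_1$ and $f_2$ are unrestricted, two of the three functions in $\tr_{\S}$ fail to be locally in $L^2$, so both slots would need a multi--frequency Calder\'on--Zygmund treatment simultaneously, and the existing argument (Lemma \ref{CZ} together with the size/forest decomposition) only accommodates one such slot at a time. Your idea of running the multi--frequency CZ decomposition on $\mathbf{1}_F$ at two density scales does not meet this difficulty: in the paper the CZ decomposition is applied to the unbounded slot ($f_1$), not to the subindicator slot, since $\mathbf{1}_F$ after the dilation normalization is already bounded and gains nothing from CZ; more importantly, after your normalization with three indicators there is no unbounded function left and hence no room for the $N_I^{\alpha/2}$ gain of Lemma \ref{CZ} that is the engine of Proposition \ref{mainprop}. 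A genuine proof of \eqref{cjt} would have to show directly, without the subindicator crutch on $f_2$, something in the spirit of the symmetric reduction \eqref{polar1}, and as of this paper that remains open.
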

We expect the same bound to hold for $\mathsf{BHT}$.
To support the conjecture we mention that there is no known counterexample to disprove even the strong type bound
 $$\V_\mathbf{Qt}:L^{p_1}(\R) \times L^{p_2}(\R) \to L^{\frac23}(\R).$$

\subsection{A summary of previous results} Perhaps the simplest way to prove \eqref{BHTbdnonLor}  is to  establish the so called generalized restricted type bounds of the form
\begin{equation} \label{rwtintro}
|\tr_\mathbf{Qt}(f_1,f_2,f_3)| \lesssim  \prod_{j=1}^3 |F_j|^{\alpha_j}, \qquad |f_1| \leq \cic{1}_{F_1}, \;|f_2| \leq \cic{1}_{F_2},\;  |f_3| \leq \cic{1}_{F_3'} \end{equation}
 where $F_3' \subset F_3$ is an appropriately chosen major set, for all  tuples $(\alpha_1, \alpha_2,\alpha_3)$ satisfying
\begin{equation}
\label{hexa-tuples}
\textstyle 1>\alpha_j \geq 0,\; j=1,2, \qquad 1>\alpha_3> -\frac12,\qquad \sum_{j=1}^3 {\alpha_j}=1.
\end{equation}
Here and in what follows, the term \emph{major set} indicates any subset $F'\subset F$ of a set $F$ of finite measure with $|F'| \geq \frac18 |F|.$
These estimates are then turned into strong type bounds via interpolation.

We also mention some partial results towards the Conjecture \ref{conj}.
A refinement of the proof by Lacey and Thiele by Bilyk and Grafakos \cite{BG} provides  the following logarithmically bumped up version of \eqref{rwtintro} near the endpoint tuple $(1,\frac12,-\frac12)$: \begin{equation} \label{BGest}
|\tr_\mathbf{Qt}(f_1,f_2,f_3)| \lesssim  |F_1| |F_2|^{\frac12} |F_3|^{-\frac12} \log\big( \e + \textstyle \frac{|F_3|^2}{|F_1||F_2|}\big)^2, \end{equation} for   functions $|f_1| \leq \cic{1}_{F_1}, \,|f_2| \leq \cic{1}_{F_2},\,  |f_3| \leq \cic{1}_{F_3'}$,
in the regime (say) $|F_1| \leq |F_2|.$
 The analogous estimate for $\mathsf{BHT}$ is   in turn used to derive distributional estimates, and, in the subsequent work \cite{CGMS}, to derive estimates on Lorentz and Orlicz-Lorentz spaces near the endpoint  $p_3= \frac23.$
\section{Main results} \label{main}
We now detail our first main theorem, which is a weak $L^{\frac23}$ bound with only one subindicator function.
\begin{theorem} \label{mainth}Let $p_1,p_2 >1, \, \textstyle\frac{1}{p_1}+ \frac{1}{p_2}= \frac32.$ Then we have the mixed  type estimate
\begin{equation} \label{mrwtmain}
\lambda \big|\big\{x: | \V_\mathbf{Qt} (f_1,f_2)(x)| > \lambda\}\big|^{\frac32} \lesssim (p_1)'\|f_1\|_{p_1} |F_2|^{\frac{1}{p_2}}, \qquad \forall\; |f_2|\leq \cic{1}_{F_2}.
\end{equation}
\end{theorem}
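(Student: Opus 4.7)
\medskip

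\textbf{Plan.} My first move would be to convert the weak-type bound \eqref{mrwtmain} into a dual restricted-type estimate. Using the standard characterization of $L^{\frac23,\infty}$ by restricted dualities, \eqref{mrwtmain} is equivalent to the mixed-type trilinear bound
\begin{equation}\label{plan:dual}
|\tr_\mathbf{Qt}(f_1,f_2,f_3)| \lesssim (p_1)'\,\|f_1\|_{p_1}\,|F_2|^{\frac{1}{p_2}}\,|F_3|^{-\frac12},\qquad |f_2|\leq\cic{1}_{F_2},\ |f_3|\leq\cic{1}_{F_3'},
\end{equation}
for every $F_3$ of finite measure and some major subset $F_3'\subset F_3$ at our disposal. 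By dilation invariance and homogeneity I normalize $|F_3|=\|f_1\|_{p_1}=1$, so the target \eqref{plan:dual} reduces to a uniform upper bound of order $(p_1)'|F_2|^{1/p_2}$.

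To choose the major subset, I would define the exceptional set $\Omega=\{M\cic{1}_{F_2}>C|F_2|\}$ and set $F_3':=F_3\setminus\Omega$; for $C$ universal this gives $|F_3'|\geq\tfrac78|F_3|$. The point is that for any tile $s$ with $I_s\not\subset\Omega$, the $f_2$-size over trees with top time-interval $I_s$ is controlled by $|F_2|$, which will feed the $|F_2|^{1/p_2}$ factor through the usual size/count tree estimates. The $f_3$-size over any tree is bounded by $1$ thanks to $|f_3|\leq\cic{1}_{F_3'}$. After this the quartile collection is sorted, in the standard Lacey--Thiele fashion, into a union of trees $\T$ to which the $f_3$-BMO and $f_2$-BMO Bessel inequalities can be applied.

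The heart of the argument is the multi-frequency Calder\'on--Zygmund decomposition of \cite{NOT} applied to $f_1$. At each dyadic level $\lambda$ of the $f_3$-size (or equivalently at a fixed generation of selected trees with tops $T_1,\dots,T_N$ and top-quartile frequencies $\xi_{T_k}=n_{T_k}/|I_{T_k}|$), I would apply the NOT lemma to write $f_1=g+\sum_{Q\in\mathcal Q}b_Q$ at height $\sim1$, so that $\|g\|_\infty\lesssim 1$, each $b_Q$ is supported in a dyadic cube $Q$ satisfying $|\mathcal Q|^{1-1/p_1}\lesssim \|f_1\|_{p_1}=1$, and each $b_Q$ is orthogonal to all Walsh characters $W_{n_{T_k}}$ with $I_{T_k}\supsetneq Q$. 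The good part $g$ is then effectively a subindicator of a set of measure $\lesssim 1$ at height $\sim 1$, and \eqref{rwtintro} at $(1,1/p_2,-1/2)$ gives the correct contribution. The bad part pairs trivially to zero with $w_{(T_k)_1}$ whenever $Q\subsetneq I_{T_k}$, so only quartiles with $I_s\subseteq Q$ contribute; these are summed via a standard John--Nirenberg/Bessel argument across the tree hierarchy. Combining the estimates over the dyadic $f_3$-size levels $\lambda=2^{-j}$ produces a geometric sum in $j$ whose rate depends on $p_1$, yielding precisely the factor $(p_1)'$ in \eqref{plan:dual}.

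The main obstacle is the clean accounting of the bad part: the NOT cancellation only kills tree-tops at scales strictly larger than the CZ cube $Q$, so the residual contribution of quartiles $s$ with $I_s\subseteq Q$ must be controlled by a separate size/count estimate with the \emph{correct} dependence on the number of selected frequencies $N$. A naive $N$-loss from Parseval would ruin the endpoint $|F_3|^{-1/2}$; the key quantitative input is that the NOT lemma allows the $N$-dependence to enter only as $\sqrt{N}$, and this is exactly compensated by the $\ell^2$-Bessel bound $N\lesssim \lambda^{-2}$ for the number of selected trees at size level $\lambda$. Carrying out this $\sqrt{N}$-against-Bessel cancellation uniformly over scales, while keeping the geometric summation in $p_1$ explicit, is where the bulk of the work lies.
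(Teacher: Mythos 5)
Your overall architecture matches the paper's: pass to the dual restricted-type estimate with a chosen major subset, remove an exceptional set, sort the quartiles into forests by size, apply the multi-frequency Calder\'on--Zygmund decomposition of \cite{NOT} to $f_1$, and sum geometrically to produce $(p_1)'$. However, there are two concrete places where your plan would break down as written.

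\emph{The $N$-dependence is misstated, and the discrepancy is fatal.} You write that Parseval gives $N$ while the NOT lemma improves this to $\sqrt{N}$, and that $\sqrt{N}$ ``is exactly compensated by'' the Bessel bound $N\lesssim\lambda^{-2}$. In fact the naive Bessel/Parseval bound already gives $\sqrt{N}$ (each of the $N_I$ coefficients is $O(\sqrt{|I|})$ and they are $\ell^2$-summed). The actual gain from the multi-frequency CZ decomposition (Lemma 5.1 of the paper) is $\|g_1^{\S}\|_2 \lesssim N^{\frac12-\frac1{(p_1)'}}$, coming from the interpolation $\|v\|_{L^q(I)}\leq N_I^{\frac12-\frac1q}\|v\|_{L^2(I)}$ for $v\in H_{\S_I}$ combined with the $L^{p_1}$-boundedness of $f_1$ on $I$. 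If you plug the $\sqrt{N}$ you propose into the forest estimate $\sigma_3\|g_1^\S\|_2\|f_2\|_2$ with $\sigma_3\sim\lambda$ and $N\lesssim\lambda^{-2}$, each dyadic level contributes a \emph{scale-independent} amount $\lesssim |F_2|^{1/2}$ and the sum over $\lambda=2^{-j}$ diverges. Only the strict improvement $N^{\frac12-\frac1{(p_1)'}}$ produces the factor $\lambda^{2/(p_1)'}$ that makes the series geometric, and the $(p_1)'$ constant is precisely $\sum_j 2^{-2j/(p_1)'}$. So the ``geometric sum in $p_1$'' you invoke is incompatible with the $\sqrt{N}$ bookkeeping you propose; these are not two separate lemmas but the same estimate, and its $q$-dependence cannot be dropped.

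\emph{Omitting the $f_1$-exceptional set leaves a residual term you cannot control.} You define the major subset $F_3'=F_3\setminus\Omega$ using only $\Omega=\{M\cic{1}_{F_2}>C|F_2|\}$, and then say the bad-part contribution from quartiles with $I_s\subseteq Q$ can ``be summed via a standard John--Nirenberg/Bessel argument.'' This does not work: on such small quartiles inside a Calder\'on--Zygmund cube, $\size_{f_1}$ is unbounded (that is precisely what defines the cubes), and the bad part $b_Q = f_1\cic{1}_Q - g_Q$ is not locally $L^2$ since $f_1$ is only $L^{p_1}$ with $p_1<2$. The paper sidesteps this entirely by also removing $E_1=\{\M_{p_1}f_1\geq c\}$ from $G_3$, which forces every contributing quartile $s$ to satisfy $I_s\cap E_1^c\neq\emptyset$, hence $I_s\supsetneq I^*$ for every CZ interval $I$ it meets. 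This is what guarantees that the local projection spaces $H_{\S_I}$ capture \emph{all} relevant wave packets restricted to $I$, so that the substitution $\tr_\S(f_1,f_2,f_3)=\tr_\S(g_1^\S,f_2,f_3)$ is exact with no residue, and it is also what makes the counting function $N$ constant on each $I$. Without removing $E_1$ the substitution identity fails, and there is no ``standard'' way to absorb the error. Adding $E_1$ to your exceptional set costs nothing in measure and removes both problems; it is not optional.

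A smaller imprecision: the bad parts must be orthogonal not merely to the top Walsh characters $W_{n_{T_k}}$ but to the full local spans $H_{\S_I}$ generated by all tiles $s\in\S_I$ underneath the quartiles of $\S$, since otherwise $\langle b_Q, w_{s_1}\rangle$ does not vanish for $s$ deeper in a tree with $I_s\supsetneq Q$. The paper's \eqref{span} is the statement that makes this work; it relies on the scale separation \eqref{sepscl} and the Walsh recursion, and should be stated explicitly.
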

Theorem \ref{mainth} is a fairly direct consequence of the proposition below, which is of independent interest. Indeed, it is worth mentioning  that the generalized restricted type estimate \eqref{rwtintro} was only known to hold in the open range $\alpha_3> -\frac12$. This is because --in the terminology of the following sections-- the summation of the forests gives rise to a lacunary series that diverges when $\alpha_3= -\frac12$.
 The following proposition  implies  the endpoint $\alpha_3= -\frac12$ generalized restricted type  \eqref{rwtintro}, but in fact it is stronger than that.  The proof will be carried out   in Section \ref{sect5};  we kept  track of the explicit dependence on $p_1$ of the implied constant in the estimate  \eqref{mrwt}.\begin{proposition} \label{mainprop}
Let $p_1,p_2 >1, \,\textstyle\frac{1}{p_1}+ \frac{1}{p_2}= \frac32.$ Then for every $f_1 \in L^{p_1}(\R)$ and every $F_2, G_3 \subset \R$, there exists a major set $F_3 \subset G_3$ such that
\begin{equation} \label{mrwt}
|\tr_\mathbf{Qt}(f_1,f_2,f_3)| \lesssim (p_1)' \|f_1\|_{p_1} |F_2|^{\frac{1}{p_2}} |G_3|^{-\frac{1}{2}}, \qquad \forall\; |f_j| \leq \cic{1}_{F_j}, \; j=2,3.
\end{equation}
\end{proposition}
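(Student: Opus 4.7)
The plan is to combine the standard tile--tree--forest decomposition of $\tr_\mathbf{Qt}$ due to Muscalu--Tao--Thiele with two endpoint-specific ingredients: a major subset $F_3\subset G_3$ designed to furnish sharp $L^\infty$-control on the $f_3$-sizes of the selected trees, and the multi-frequency Calder\'on--Zygmund decomposition of \cite{NOT} applied to $f_1$, which is the device responsible for the $(p_1)'$ factor on the right hand side of (\ref{mrwt}).

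\emph{Selection of the major set.} Letting $M$ denote the Walsh dyadic maximal function, define
$$ E \;=\; \bigl\{ M\cic{1}_{F_2} > C |F_2|/|G_3| \bigr\} \;\cup\; \bigl\{ M\cic{1}_{G_3} > \tfrac{1}{2} \bigr\}, $$
with $C$ tuned so that $|E|\leq |G_3|/16$, and let $F_3 = G_3 \setminus E$. This removal guarantees that every quartile contributing to the form against $f_3$ (i.e.\ whose top interval is not engulfed by $E$) has $f_2$- and $f_3$-sizes controlled, respectively, by the relative densities $(|F_2|/|G_3|)^{1/p_2}$ and $1$ in the natural Walsh normalization.

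\emph{Size decomposition.} For each triple of dyadic size scales $\vec n=(n_1,n_2,n_3)$, select a forest $\F_{\vec n}$ of maximal trees on which the $f_j$-size is comparable to $2^{-n_j}$ times a reference $\sigma_j^\star$, with $\sigma_1^\star \sim \|f_1\|_{p_1}|G_3|^{-1/p_1}$, $\sigma_2^\star \sim (|F_2|/|G_3|)^{1/p_2}$, and $\sigma_3^\star$ the $L^\infty$-bound coming from the major set. Standard Bessel and size lemmas produce a per-forest estimate with geometric decay in $n_2$ and $n_3$; because $F_3$ is a major set, the effective $n_3$-sum is truncated below, which is precisely the mechanism removing the divergence at $\alpha_3=-\tfrac12$. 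The $n_1$-sum, by contrast, is only marginally convergent at the endpoint, and the next step is needed to close it.

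\emph{Multi-frequency decomposition and main obstacle.} To close the $n_1$-sum I would apply the multi-frequency CZ decomposition from \cite{NOT} to $f_1$ at each scale, writing $f_1 = g_{n_1}+b_{n_1}$, where $g_{n_1}$ is pointwise controlled by $\lesssim 2^{n_1/(p_1)'}\sigma_1^\star$ and $b_{n_1}$ is supported in a set of measure $\lesssim 2^{-n_1 p_1}|G_3|$ on which it has vanishing inner product against each of the $O(2^{2n_1})$ distinct Walsh frequencies $w_{s_1}$ carried by the trees in $\F_{n_1,\cdot,\cdot}$. The $g_{n_1}$-part contributes an $L^2$-summable tail, while the $b_{n_1}$-part is annihilated by the multi-frequency cancellation together with the disjointness orthogonality \eqref{walshorth}. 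The resulting lacunary series $\sum_{n_1\geq 0} 2^{-n_1(1-1/p_1)}$ sums to $\sim (p_1)'$, exactly the constant appearing in (\ref{mrwt}). The principal obstacle is verifying that the exceptional set produced by the multi-frequency decomposition at each scale can be absorbed into $E$ without destroying the major-set property, and that the frequency cancellation is robust enough to survive the internal tile structure of each selected tree rather than applying only at its top; getting the $(p_1)'$ factor sharp further requires careful bookkeeping of the constants in the Bessel and John--Nirenberg inequalities underlying the tree-count bounds.
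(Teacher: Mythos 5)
Your overall strategy — size decomposition plus a multi-frequency Calder\'on--Zygmund treatment of $f_1$ producing the $(p_1)'$ constant — is the right one and is essentially what the paper does, but your execution of the two crucial ingredients is broken. First, the exceptional set. As written, $E$ contains $\{M\cic{1}_{G_3}>\tfrac12\}$, which (by Lebesgue density) contains $G_3$ up to a null set; hence $F_3=G_3\setminus E$ would be null, not a major set. More importantly, $E$ is \emph{missing} the set $\{\M_{p_1}f_1\ge c\}$. This set is indispensable for two reasons: it is what gives $\size_{f_1}(\S)\lesssim 1$ after restricting to quartiles whose spatial interval meets $E^c$, and — crucially — it is exactly the set over which the paper's multi-frequency CZ decomposition (Lemma \ref{CZ}) is built: the CZ intervals are the children of the maximal dyadic components of $\{\M_{p_1}f_1\ge c\}$. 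By making the CZ exceptional set and the removed set coincide, the paper makes the "principal obstacle" you flag — absorbing the CZ bad set into the major set without destroying it — simply disappear. You identify this obstacle explicitly and leave it open; in the paper's argument it is not a separate step to be absorbed, it is the same set by construction.

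Second, your size decomposition is structured differently from the paper's and this matters at the endpoint. You decompose in all three sizes $(n_1,n_2,n_3)$ and then try to close the $n_1$-sum with a per-scale CZ decomposition $f_1=g_{n_1}+b_{n_1}$. The paper instead decomposes only in $(n_2,n_3)$ (sizes of $f_2$ and $f_3$), obtaining forests $\S_{n_2,n_3}$ with $\tops(\S_{n_2,n_3})\lesssim\min\{2^{2n_2},2^{2n_3}\}$, and then applies Lemma \ref{CZ} once per forest to replace $f_1$ by $g_1^{\S_{n_2,n_3}}$ with $\|g_1^{\S_{n_2,n_3}}\|_2\lesssim\min\{2^{\alpha n_2},2^{\alpha n_3}\}$, $\alpha=1-2/(p_1)'$. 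The $(p_1)'$ factor then emerges from the geometric sum $\sum 2^{-(1-\alpha)n}\sim\tfrac{1}{1-\alpha}=\tfrac{(p_1)'}{2}$ in the remaining two-parameter sum; there is no $n_1$-sum at all. Without this device, your per-scale CZ would need its exceptional sets to be uniformly absorbable at every $n_1$, which is precisely the unresolved difficulty you name. In short: the idea is right, but you would need to (i) correct the exceptional set to $\{\M_{p_1}f_1\ge c\}\cup\{\M_1\cic{1}_{F_2}\ge c|F_2|/|G_3|\}$, and (ii) apply the multi-frequency CZ relative to the counting function of each forest $\S_{n_2,n_3}$, anchored on the single set $\{\M_{p_1}f_1\ge c\}$ — not a scale-by-scale decomposition of $f_1$ itself.
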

\begin{proof}[Proof that Proposition \ref{mainprop} implies Theorem \ref{mainth}] Let $f_1, F_2$, $f_2$ with $|f_2| \leq \cic{1}_{F_2} $ be given. By using a standard approximation argument, it suffices to prove the bound for an arbitrary finite subset $\S\subset\mathbf{Qt}$, as long as the bound is independent of $\S$. Since $\S$ is a finite collection, there exists a constant $C_0$, possibly depending on $p_1,\S,f_1, $ and $f_2$, such that \eqref{mrwtmain} holds. We show that the constant is independent of all but $p_1.$ Fix $\lambda>0$ and take $G_3$ to be the set in the left hand side of \eqref{mrwtmain}. The existence of $C_0$ implies that $G_3$ has finite measure, so that we may apply Proposition \ref{mainprop}, and obtain the existence of a major set $F_3 \subset G_3$ such that \eqref{mrwt} holds for any $f_3$ with $|f_3| \leq \cic{1}_{F_3} $. We can choose $f_3$ such that $|\tr_\S(f_1,f_2,f_3)|>\lambda \frac{|G_3|}{2}$, so that
$$
\lambda|G_3| \lesssim |\tr_\S(f_1,f_2,f_3)| \lesssim (p_1)'\|f_1\|_{p_1} |F_2|^{\frac{1}{p_2}} |G_3|^{-\frac{1}{2}},
$$
 which, rearranged, gives exactly \eqref{mrwtmain}.
\end{proof}
We are interested in extrapolating the bound of Theorem \ref{mainth} to Lorentz spaces. In this direction we have the  theorem below, which can be seen as a partial result towards the conjectured bound \eqref{cjt}.  We next  show   how this follows from Theorem \ref{mainth},  via  linear extrapolation in the style of \cite{CCM}.
\begin{theorem} \label{lorentzthm}Let $p_1,p_2 >1, \, \textstyle\frac{1}{p_1}+ \frac{1}{p_2}= \frac32.$ Then
\begin{equation} \label{lorenzest}
\|\V_\mathbf{Qt} (f_1,f_2)\|_{ {\frac{2}{3},\infty} } \lesssim_{p_1}\|f_1\|_{ p_1 } \|f_2\|_{{p_2,\frac23} }.
\end{equation}
\end{theorem}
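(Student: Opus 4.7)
I plan to deduce \eqref{lorenzest} from Theorem \ref{mainth} by a standard linear extrapolation in the style of \cite{CCM}: decompose $f_2$ into dyadic level sets, apply the restricted weak-type bound of Theorem \ref{mainth} to each piece, and recombine the resulting estimates using the $\frac23$-subadditivity of $\|\cdot\|_{\frac23,\infty}^{\frac23}$.

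Concretely, I set $E_k = \{2^k < |f_2| \leq 2^{k+1}\}$ and $f_{2,k} = f_2 \cic{1}_{E_k}$, so that $f_2 = \sum_{k \in \Z} f_{2,k}$ and $|f_{2,k}/2^{k+1}| \leq \cic{1}_{E_k}$. Rephrasing the weak-$L^{\frac23}$ bound \eqref{mrwtmain} in Lorentz notation and applying it to $f_1$ and the sub-indicator function $f_{2,k}/2^{k+1}$ yields
$$
\|\V_\mathbf{Qt}(f_1, f_{2,k})\|_{\frac23, \infty} \lesssim (p_1)' \|f_1\|_{p_1}\, 2^k |E_k|^{\frac{1}{p_2}}.
$$
Summing over $k$ via the bilinearity of $\V_\mathbf{Qt}$ and the $\frac23$-quasi-triangle inequality for $L^{\frac23,\infty}$ --- which, by Aoki--Rolewicz, supplies the countable subadditivity $\|\sum_k g_k\|_{\frac23,\infty}^{\frac23} \lesssim \sum_k \|g_k\|_{\frac23,\infty}^{\frac23}$ --- I obtain
$$
\|\V_\mathbf{Qt}(f_1, f_2)\|_{\frac23, \infty}^{\frac23} \lesssim \big((p_1)' \|f_1\|_{p_1}\big)^{\frac23} \sum_{k \in \Z} 2^{\frac{2k}{3}} |E_k|^{\frac{2}{3p_2}}.
$$

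To close the argument, I observe that $E_k \subseteq \{|f_2| > 2^k\}$, so by the standard dyadic expression of the Lorentz quasi-norm in terms of the distribution function,
$$
\sum_{k \in \Z} 2^{\frac{2k}{3}} |E_k|^{\frac{2}{3p_2}} \leq \sum_{k \in \Z} 2^{\frac{2k}{3}} |\{|f_2|>2^k\}|^{\frac{2}{3p_2}} \approx \|f_2\|_{p_2, \frac23}^{\frac23},
$$
and raising to the $\frac32$ power yields \eqref{lorenzest}. The only mildly delicate point is the countable $\frac23$-subadditivity of $\|\cdot\|_{\frac23,\infty}^{\frac23}$, which is needed because $L^{\frac23,\infty}$ is merely quasi-Banach; this is classical (alternatively, one can prove it by hand by optimally splitting $\lambda = \sum_k \lambda_k$ in the weak-type definition) and presents no real obstacle once the dyadic atomic decomposition is in place.
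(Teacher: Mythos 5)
Your strategy---decomposing $f_2$ into dyadic level sets, applying Theorem \ref{mainth} to each sub-indicator piece, and recombining via $\frac23$-subadditivity of $\|\cdot\|_{\frac23,\infty}^{\frac23}$---is the same as the paper's, and the level-set bookkeeping (passing from the $E_k$'s to $\|f_2\|_{p_2,\frac23}^{\frac23}$) is correct. The issue is the justification you give for the crucial recombination step. Neither argument you propose actually yields the needed $\frac23$-subadditivity, so as written there is a gap at precisely the point you flag as ``mildly delicate.''

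The Aoki--Rolewicz theorem does not supply this. For $L^{p,\infty}$ with $0<p<1$, the quasi-norm constant obtained from $|\{|f+g|>\lambda\}|\le|\{|f|>\lambda/2\}|+|\{|g|>\lambda/2\}|$ is $C=2^{1/p}$, and Aoki--Rolewicz then produces an equivalent $r$-norm only for $r$ satisfying $(2C)^r=2$, i.e. $r=p/(p+1)$. For $p=\frac23$ this gives $r=\frac25$, strictly smaller than $\frac23$. The ``optimal splitting $\lambda=\sum_k\lambda_k$'' argument gives exactly the same exponent: minimizing $\sum_k \|g_k\|_{p,\infty}^p/\lambda_k^p$ over $\sum_k\lambda_k=\lambda$ leads to $\lambda_k\propto\|g_k\|_{p,\infty}^{p/(p+1)}$ and the bound $\|\sum_k g_k\|_{p,\infty}\lesssim\big(\sum_k\|g_k\|_{p,\infty}^{p/(p+1)}\big)^{(p+1)/p}$, again the $p/(p+1)$ exponent, not $p$.

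The countable $p$-subadditivity of $\|\cdot\|_{p,\infty}^p$ for $p<1$ (with uniform constant) is true, but it is a genuinely nontrivial fact; it is exactly what the cited [CCM, Theorem 3.1] provides, and is what the paper invokes. If you wish to prove it directly, the right mechanism is a Chebyshev argument with truncation (not a splitting of $\lambda$): normalize $\sum_k\|g_k\|_{p,\infty}^p=1$, set $A=\{\sum_k|g_k|>\lambda\}$, peel off $A_1=\bigcup_k\{|g_k|>\lambda\}$ which is controlled by $|A_1|\le\lambda^{-p}$, and on $A_2=A\setminus A_1$ apply Chebyshev to $\sum_k(|g_k|\wedge\lambda)$, estimating each $\int_{A_2}(|g_k|\wedge\lambda)$ by the layer-cake formula and closing a self-improving inequality for $|A_2|\lambda^p$. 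This is the Stein--Weiss/Kalton argument. So the proposal is correct in outline, but you should replace both of the stated justifications for the subadditivity step by a citation to [CCM, Theorem 3.1] (or to the Kalton/Stein--Weiss argument sketched above).
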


\begin{remark} Note that ${L^{p_2,\frac23}(\R)}$ is a proper subspace of ${L^{p_2} (\R)}$; the result of Theorem \ref{lorentzthm} is therefore  a  strictly weaker version of \eqref{cjt}.\end{remark}
\begin{proof}[Proof that Theorem \ref{mainth} implies Theorem \ref{lorentzthm}]
 We can normalize $\|f_1\|_{p_1} =1 = \|f_2 \|_{{p_2,\frac23}}$.  We perform the well-known decomposition
$$
f_2 = \sum_{k \in \Z} 2^{k} g_{k}, \qquad  |g_k| \leq \cic{1}_{G_k}, \; k \in \Z,
$$
with  each $G_k$ being a bounded set and
\begin{equation}
\label{esteq}
\big\|\{2^{k}|G_k|^{\frac{1}{p_2}}\}\big\|_{\ell^{\frac23}} \sim_{p_1} 1 = \|f_2 \|_{{p_2,\frac23}}.
\end{equation} We then have
$$
h:=\V_{\S} (f_1,f_2)= \sum_{k \in \Z}c_kh_k, \qquad  h_k=\frac{\V_\S (f_1,g_k) }{\|\V_\S (f_1,g_k)  \|_{\frac23,\infty}}, \quad c_k=2^{k} \|\V_\S (f_1,g_k)  \|_{\frac23,\infty}.
$$
Applying Theorem \ref{mainth}, we obtain the estimate \begin{equation}
\label{estcoeff} |c_k| \lesssim 2^{k}\|f_1\|_{p_1} |G_k|^{\frac{1}{p_2}}= 2^{k}  |G_k|^{\frac{1}{p_2}}.
\end{equation}
It is shown in \cite[Theorem 3.1]{CCM} (see also \cite[Theorem 2.1]{CGMS}) that
$$
\|f \|_{L^{q, r}} \lesssim \inf \bigg\{ \|\{\beta_k\}\|_{\ell^{q}} : f= \sum_{k} \beta_k \phi_k, \; \|\phi_k\|_{L^{q,r} }\leq 1  \bigg\}, \quad \forall \;0<q\leq 1, 0<r \leq \infty.
$$
Therefore, taking advantage of \eqref{estcoeff} and subsequently of \eqref{esteq},
$$
\|h \|_{ {\frac23, \infty}}  \lesssim \|\{c_k\}\|_{\ell^{\frac23}} \leq \big\|\{2^{k}|G_k|^{\frac{1}{p_2}}\}\big\|_{\ell^{\frac23}} \sim_{p_1} 1,
$$ and this completes the proof.\end{proof}

 Theorems \ref{mainth} and \ref{lorentzthm} do not cover the case $p_1=1,p_2=2$. In this subsection, we derive endpoint results involving Orlicz-Lorentz substitutes of $L^1(\R)$ and $L^{\frac23,\infty}(\R)$  as a consequence of the   estimate
\begin{align} \label{logproprearr}  &
\forall 
\;  f_1 \in L^1(\R)  \textrm{ with }   \|f_1\|_\infty \leq 1 ,\,  f_2 \in L^2(\R) ,  
  \\ & \big(\V_\mathbf{Qt} (f_1,f_2)\big)^*(t)  \leq \frac{\|f_1\|_1}{t^{\frac32}} \log\Big( \e +  \frac{t}{\|f_1\|_1} \Big)\|f_2\|_2, \qquad \forall t >0.\nonumber\end{align}
Let us show how to obtain \eqref{logproprearr} from the (family of) weak type estimates contained in the following proposition, whose proof we defer to Section \ref{sect5}.
\begin{proposition} \label{logprop} Let $1<p<2$, and $r$  defined by $\frac1p+\frac12=\frac1r$. Then  $\|\V_\mathbf{Qt}\|_{L^p \times L^2 \to L^{r,\infty}} \lesssim p'$, that is
\begin{equation} \label{wtelog}
\lambda\big|\big\{x: |\V_\mathbf{Qt} (f_1,f_2)(x)| > \lambda\big\}\big|^{\frac1r} \lesssim p'\|f_1\|_{p} \|f_2\|_2.\end{equation}
\end{proposition}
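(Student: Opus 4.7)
The plan is to follow the same template used for Proposition~\ref{mainprop}: first reduce the weak-type inequality \eqref{wtelog} to a trilinear form bound on a major set, and then establish the latter via a Walsh tree/forest decomposition in which an exceptional set, carefully calibrated in $p$, absorbs the endpoint blowup.

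First I would invoke the standard duality/linearization argument -- identical in structure to the deduction of Theorem~\ref{mainth} from Proposition~\ref{mainprop} -- to reduce \eqref{wtelog} to the following claim: for every $f_1 \in L^p(\R)$ with $\|f_1\|_p=1$, every $f_2 \in L^2(\R)$ with $\|f_2\|_2=1$, and every $G_3 \subset \R$ of finite positive measure, there exists a major set $F_3 \subset G_3$ such that
\begin{equation}\label{logreducedgoal}
|\tr_\mathbf{Qt}(f_1,f_2,f_3)| \lesssim p' \, |G_3|^{\frac12-\frac1p}, \qquad |f_3|\leq \cic{1}_{F_3}.
\end{equation}
The major set is constructed as $F_3 = G_3 \setminus E$, where
$$
E \;=\; \big\{M f_1 > K p' |G_3|^{-1/p}\big\} \,\cup\, \big\{M f_2 > K |G_3|^{-1/2}\big\}
$$
and $M$ is the Walsh-adapted dyadic maximal operator. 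The strong $(p,p)$ and $(2,2)$ bounds $\|Mf_1\|_{p,\infty} \lesssim p'\|f_1\|_p$ and $\|Mf_2\|_{2,\infty} \lesssim \|f_2\|_2$ let us choose $K$ large enough that $|E| \leq |G_3|/2$, ensuring that $F_3$ is major. Crucially, the maximal-function bound for $f_1$ is where the factor $p'$ in \eqref{logreducedgoal} enters.

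To prove \eqref{logreducedgoal} I would employ the tree-selection and forest-summation procedure of \cite{Th1,LT1}, organizing $\mathbf{Qt}$ by dyadic regimes of the three standard Walsh sizes $\size_j(T)$, $j=1,2,3$. The construction of $F_3$ confines every tree whose top interval meets $F_3$ to the size range
$$
\size_1(T) \lesssim p' |G_3|^{-1/p}, \qquad \size_2(T) \lesssim |G_3|^{-1/2}, \qquad \size_3(T) \lesssim 1,
$$
while the Bessel-type energy estimates on $f_1 \in L^p$, $f_2 \in L^2$ and $f_3 \in L^2$ (using $\|f_3\|_2 \leq |G_3|^{1/2}$) yield the length bounds $\sum_{T} |I_T| \lesssim \size_j(T)^{-p_j}\|f_j\|_{p_j}^{p_j}$ with $(p_1,p_2,p_3)=(p,2,2)$. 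The single-tree estimate $|\tr_T| \lesssim \size_1(T)\,\size_2(T)\,\size_3(T)\,|I_T|$ is then summed against the minimum of these length bounds over the dyadic grid of size scales.

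After the change of variables that normalizes the three size thresholds to $1$, the resulting three-index sum becomes a geometric series whose total is bounded by a constant uniform in $p \in (1,2)$; multiplied by the product of the thresholds this returns the desired $p'|G_3|^{1/2-1/p}$. The main technical point is verifying the uniform convergence of this geometric series as $p \to 1^+$: the orthogonality \eqref{walshorth} feeding into the energy bounds on $f_2$ and $f_3$ keeps two of the geometric sums bounded independently of $p$, and the threshold shift of $p'$ introduced in the definition of $F_3$ compensates exactly for the divergence that would otherwise occur at the edge $\alpha_3 = -1/2$ of the hexagon \eqref{hexa-tuples}.
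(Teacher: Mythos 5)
Your reduction to the trilinear-form estimate and the construction of a major set $F_3$ by removing level sets of maximal functions are on the right track and match the structure of the paper's argument. However, the core of your plan has a genuine gap: you propose to run a classical three-parameter tree/forest summation with ``Bessel-type energy estimates on $f_1 \in L^p$'' of the form $\sum_T |I_T| \lesssim \size_1(T)^{-p}\|f_1\|_p^p$. No such $L^p$ ($p<2$) energy estimate is available: the size lemma (Lemma~\ref{sizelemma}) is an $L^2$ orthogonality statement, and it gives $\tops \lesssim \sigma^{-2}\|f\|_2^2$ only for $f \in L^2$. When $f_1$ is restricted to be subindicator (as in the generalized restricted weak type setup \eqref{rwtintro}) one has automatic $L^2$ control and can proceed classically; here $f_1$ is an arbitrary $L^p$ function with $p<2$, and there is simply no $L^2$ information to feed into the selection algorithm. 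This is precisely the obstacle the paper addresses with the multi-frequency Calder\'on--Zygmund decomposition (Lemma~\ref{CZ}), which you do not invoke at all: after decomposing $\S$ into forests $\S_{n_2,n_3}$ according to the sizes of $f_2$ and $f_3$ only, one manufactures for each forest an $L^2$ surrogate $g_1^{\S_{n_2,n_3}}$ of $f_1$, with $\|g_1^{\S_{n_2,n_3}}\|_2 \lesssim 2^{(1-2/p')n_\circ}$ controlled by the top count, and with $\tr_{\S_{n_2,n_3}}(f_1,f_2,f_3)=\tr_{\S_{n_2,n_3}}(g_1^{\S_{n_2,n_3}},f_2,f_3)$. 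This is the key mechanism; without it the proof does not close.

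A second, related problem concerns where the factor $p'$ comes from. You shift the maximal-function threshold for $f_1$ by $p'$ inside the exceptional set $E$ and assert that the resulting three-index geometric series is then bounded uniformly in $p \in (1,2)$, with the threshold shift ``compensating exactly'' for the endpoint divergence. This reverses the actual mechanism. In the paper's proof the exceptional-set thresholds are absolute constants $c$ (after normalizing $\|f_1\|_p=\|f_2\|_2=1$ and $|G_3|\sim 1$), and the $p'$ emerges at the very end from the near-divergent lacunary sum $\sum_{2^n\geq 1}2^{-\frac{2}{p'}n} \lesssim p'$, whose rate of divergence as $p\to 1^+$ is exactly what the quantitative statement records. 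Inflating the exceptional set for $f_1$ by a factor $p'$ does not fix the summation -- it only worsens the starting size bound on $f_1$, and in any case the decomposition the paper uses does not iterate the size lemma in the $f_1$ variable at all, so no such ``$n_1$ geometric series'' appears. The remark in the introduction that the restricted-type argument ``gives rise to a lacunary series that diverges when $\alpha_3=-\frac12$'' is a direct warning that the classical route you are proposing cannot yield the endpoint result without a new ingredient; that ingredient is Lemma~\ref{CZ}.
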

Note that, for any given $1<p<2$,  \eqref{wtelog}  can be rewritten in terms of the decreasing rearrangement $V(t):=\big(\V_\mathbf{Qt} (f_1,f_2)\big)^*(t)$ as 
\begin{equation} \label{rearr1}
 V(t) \lesssim p'\frac{\|f_1\|_p}{t^{\frac12+\frac1p}} \|f_2\|_2, \qquad \forall t>0.   
\end{equation}
Since $\|f_1\|_\infty \leq 1$, we can estimate $\|f_1\|_p\leq \|f_1\|_1^\frac{1}{p}$ in each instance of \eqref{rearr1}. This yields
$$
V(t) \lesssim  \|f_2\|_2  \frac{\|f_1\|_1}{t^{\frac32}} p' \Bigg( \frac{t}{\|f_1\|_1}\Bigg)^{\frac{1}{p'}}, \qquad 1<p<2.
$$
At this point, \eqref{logproprearr} follows by taking infimum over $1<p<2$ in the above inequality.
\begin{remark} In the literature, it is often the case that estimates in the vein of \eqref{logproprearr} are derived by first establishing the weaker version where one or both functions $f_1,f_2$ are restricted to be subindicator. This restriction is then lifted by using further properties of the (multi)-sublinear operator (e.g. $\eps$-$\delta$ atomic approximability). This is the approach adopted in \cite{CGMS} for the bilinear Hilbert transform and by Antonov \cite{ANTO} and Arias de Reyna \cite{ADR} for the Carleson maximal partial Fourier sum operator. Our own approach of extrapolating from  weak type estimates (like \eqref{wtelog}), as opposed to restricted weak type estimates as usual, bypasses the above additional step.  \end{remark}

 As in \cite{CGMS}, we introduce the weak type weighted Lorentz space $X $ with quasi-norm
$$
\|f\|_X= \sup_{t>0} \frac{t^{\frac32}}{\log (\e + t)} f^*(t).
$$
The space $X$ is a logarithmic enlargement of $L^{\frac23,
\infty}$. 
Note that an application of the trivial inequality $\frac12\log(\e+ab) \leq \log(\e+ a)\log(\e+b)$  for all $a,b>0$,  turns \eqref{logproprearr}, after rearranging and taking supremum over $t>0$, into
\begin{equation} \label{Xbound1}
 \|\V_{\mathbf{Qt}}(f_1,f_2)\|_X \lesssim  \|f_1\|_{1} \log \left( \e +\frac{1}{\|f_1\|_1} \right)\|f_2\|_2, 
 \qquad \forall \;f_1 \in L^1(\R)  \textrm{ with }   \|f_1\|_\infty \leq 1.
\end{equation}
The above estimate \eqref{Xbound1} can be coupled with linear extrapolation in the first function $f_1$ to obtain the two endpoint theorems below. 
\begin{theorem} \label{logthm1}  There holds $$
 \V_\mathbf{Qt}: L^{1,\frac23} \log L^{ \frac23}(\R) \times L^{2 }  (\R) \to  X,
$$
where $ L^{1,\frac23} \log L^{ \frac23}(\R) $ is the Lorentz-Orlicz quasi-Banach space with quasinorm
$$
\|f\|_{L^{1,\frac23} \log L^{ \frac23} } :=  \Big\| t  \log\big( \textstyle \e+ \frac1t\big)f^*(t)\Big\|_{L^{\frac23}\left(\R_+, \frac{\d t}{t}\right)}
$$
\end{theorem}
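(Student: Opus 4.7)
The plan is to adapt the extrapolation strategy used for Theorem \ref{lorentzthm} to the first slot $f_1$, with the endpoint estimate \eqref{Xbound1} playing the role of the seed inequality. The reason this should work is that the logarithmic bump $\log(\e + 1/\|f_1\|_1)$ appearing in \eqref{Xbound1} is exactly what, after atomic summation on $f_1$, reproduces the Orlicz weight $\log(\e + 1/t)$ in the definition of $L^{1,\frac23}\log L^{\frac23}(\R)$. In this sense Theorem \ref{logthm1} is the natural endpoint accompanying \eqref{Xbound1}.

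After normalizing $\|f_2\|_2 = 1$, the first step is a standard layer-cake decomposition
\begin{equation*}
 f_1 = \sum_{k \in \Z} 2^k g_k, \qquad |g_k| \leq \cic{1}_{G_k},
\end{equation*}
with essentially disjoint sets $G_k$ whose measures $|G_k|$ decay geometrically along the support of $f_1^*$. A direct computation, partitioning $(0,\infty)$ into dyadic blocks where $f_1^*(t) \sim 2^k$, yields the identification
\begin{equation*}
 \|f_1\|_{L^{1,\frac23}\log L^{\frac23}}^{\frac23} \sim \sum_{k \in \Z} \bigl[ 2^k |G_k| \log(\e + 1/|G_k|) \bigr]^{\frac23}.
\end{equation*}
Since $\|g_k\|_\infty \leq 1$ and $\|g_k\|_1 \leq |G_k|$, applying \eqref{Xbound1} to each pair $(g_k, f_2)$ and using the elementary monotonicity of $s \mapsto s \log(\e + 1/s)$ on $(0,\infty)$ yields
\begin{equation*}
 \|\V_\mathbf{Qt}(g_k, f_2)\|_X \lesssim |G_k| \log(\e + 1/|G_k|).
\end{equation*}

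Assembling the pieces, bilinearity gives $\V_\mathbf{Qt}(f_1, f_2) = \sum_k 2^k \V_\mathbf{Qt}(g_k, f_2)$, and I would sum in $X$ via an atomic characterization
\begin{equation*}
 \|h\|_X \lesssim \inf \bigg\{ \|\{\beta_k\}\|_{\ell^{\frac23}} : h = \sum_k \beta_k \phi_k, \; \|\phi_k\|_X \leq 1 \bigg\},
\end{equation*}
directly analogous to the one for $L^{\frac23,\infty}(\R)$ provided by \cite[Theorem 3.1]{CCM} and used in the proof of Theorem \ref{lorentzthm}. Granting it, the scalars $c_k := 2^k |G_k| \log(\e + 1/|G_k|)$ dominate the $X$-norms of the summands, and therefore
\begin{equation*}
 \|\V_\mathbf{Qt}(f_1, f_2)\|_X \lesssim \|\{c_k\}\|_{\ell^{\frac23}} \sim \|f_1\|_{L^{1,\frac23}\log L^{\frac23}},
\end{equation*}
as required.

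The main technical obstacle I anticipate is justifying this $\ell^{\frac23}$-atomic characterization of $X$, i.e., verifying that $X$ enjoys the same $\frac23$-convexity that $L^{\frac23,\infty}(\R)$ does via \cite{CCM, CGMS}, despite the slowly varying factor $\log(\e + t)$ appearing in the denominator of the quasi-norm $\|\cdot\|_X$. The natural route is to partition the range of $t$ into dyadic blocks $[2^n, 2^{n+1})$ on which the log weight is essentially constant, apply the convexity of $L^{\frac23,\infty}$ block by block, and reassemble; the log factor should then only contribute an absolute multiplicative constant, independent of the length of the sum. Once this step is in place, the remainder is a straightforward assembly of the estimates above.
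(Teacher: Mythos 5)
Your strategy is the same as the paper's: fix $f_2$ with $\|f_2\|_2=1$ and linearly extrapolate in $f_1$ from the seed estimate \eqref{Xbound1}. The paper does this in one line by invoking \cite[Theorem~2.1(b)]{CGMS}, whereas you unroll that theorem by hand: layer-cake $f_1=\sum_k 2^k g_k$, apply \eqref{Xbound1} termwise, and resum in $X$. The decomposition and the identification $\|f_1\|_{L^{1,\frac23}\log L^{\frac23}}^{2/3}\gtrsim\sum_k\big[2^k|G_k|\log(\e+1/|G_k|)\big]^{2/3}$ are fine (the monotonicity of $t\mapsto t\log(\e+1/t)$ is exactly what is needed), as is the application of \eqref{Xbound1} to each $g_k$.

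The one place where you have not actually closed the argument is the $\ell^{2/3}$-atomic bound for $X$, and your proposed fix via ``dyadic blocks in $t$ on which the log weight is constant'' does not go through as stated: the decreasing rearrangement of a sum is not a local quantity, so you cannot simply apply the $L^{2/3,\infty}$ convexity ``block by block'' and reassemble without controlling how mass from one block shifts into others. What you actually need is that $X$ (a Marcinkiewicz-type space $\sup_t W(t)f^*(t)$ with $W(t)=t^{3/2}/\log(\e+t)$) is $\tfrac23$-normable, i.e.\ $\|\sum_k h_k\|_X^{2/3}\lesssim\sum_k\|h_k\|_X^{2/3}$. This is true — it follows from the usual Stein--Weiss distribution-function argument once one notes that both $W$ and $W(t)/t^{2/3}$ are nondecreasing — but it must be proved rather than asserted, and this verification is precisely what the black-box appeal to \cite[Theorem~2.1(b)]{CGMS} replaces. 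With that lemma supplied (or with the citation, as in the paper), your proposal is complete and coincides with the intended proof.
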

\begin{theorem} \label{logthm2}  We have that, for each $\eps>0$, $$
 \V_\mathbf{Qt}: L\log L (\log\log L)^{\frac12} (\log\log\log L)^{\frac12+\eps} (\R) \times L^{2 }  (\R) \to  X,
$$
where $  L \log L (\log\log L)^{\frac12} (\log\log\log L)^{\frac12+\eps}(\R)  $ is the Orlicz space\footnote{Given a Young's function $\varphi:[0,\infty] \to [0,\infty)$, the Orlicz (Banach) space $L^\varphi(\R)$ is the space of measurable functions on $\R$   with finite Orlicz norm
$$
\|f\|_{L^\varphi}:= \inf\Big\{\lambda>0 : \int_\R \textstyle\varphi\big(\frac{|f(x)|}{\lambda}\big)\, \d x \leq 1\Big\}.
$$}
with  Orlicz function
$$
 \varphi(t) = t \log \left( \e+ t \right) \big(\log  \log (\e^\e+t) \big)^{\frac12}\big(\log  \log  \log(\e^{\e^\e}+t )\big)^{\frac12+\eps}.
$$
\end{theorem}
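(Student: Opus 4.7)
The plan is to establish Theorem~\ref{logthm2} by linear extrapolation in $f_1$ from the refined bound \eqref{Xbound1}, parallel in spirit to the deduction of Theorem~\ref{logthm1} from the same ingredient but with an additional layer of iterated-log bookkeeping dictated by the Orlicz function $\varphi$.

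After normalizing $\|f_2\|_2=1$ and $\|f_1\|_{L^\varphi}\le 1$, I would decompose $f_1$ at its dyadic level sets. Setting $A_k:=\{x:2^k\le|f_1(x)|<2^{k+1}\}$, write $f_1=\sum_{k\in\Z}f_1^{(k)}$ with $f_1^{(k)}=2^{k+1}g_k$, $|g_k|\le\mathbf{1}_{A_k}$, $\|g_k\|_\infty\le 1$. The Orlicz normalization furnishes the layer-weight constraint $\sum_k\varphi(2^k)|A_k|\lesssim 1$, while applying \eqref{Xbound1} to each $g_k$ yields the layerwise endpoint estimate $\|\V_\mathbf{Qt}(f_1^{(k)},f_2)\|_X\lesssim 2^k|A_k|\log(e+1/|A_k|)$.

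To combine the layer estimates into a bound on $\|\V_\mathbf{Qt}(f_1,f_2)\|_X$ I would exploit the quasi-Banach structure of $X$, which is a logarithmic enlargement of $L^{2/3,\infty}$ and hence admits a $q$-triangle inequality $\|\sum_k h_k\|_X^q\lesssim\sum_k\|h_k\|_X^q$ for suitable $q\in(0,\tfrac23]$ (Aoki--Rolewicz); equivalently, one may work directly with the partition-of-rearrangement bound $V(t)\le\sum_k V_k(c_k t)$ with $\sum_k c_k=1$, the partition parameters $c_k$ tuned so that the residual $\log(e+c_k t/|A_k|)$ factors from \eqref{Xbound1} cancel against the $\log(e+t)$ in the denominator of the $X$-quasi-norm. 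This reduces the problem to bounding a weighted series of the form $\sum_k[2^k|A_k|\log(e+1/|A_k|)]^q$ by $\|f_1\|_{L^\varphi}^q$.

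The final step is to verify the series bound via a weighted H\"older inequality against the Orlicz density $\psi(t):=\varphi(t)/t = \log(e+t)(\log\log(e^e+t))^{1/2}(\log\log\log(e^{e^e}+t))^{1/2+\eps}$. Splitting $\log(e+1/|A_k|)$ into a piece controlled by $\log(e+2^k)\le\psi(2^k)$ and an entropy-type residual $\log(e+1/(2^k|A_k|))$, the Orlicz-weighted sum $\sum_k\varphi(2^k)|A_k|$ drops out cleanly on the main term, and convergence of the remaining factor is governed by an auxiliary series of the schematic form $\sum_k(\log\log(e^e+2^k))^{-\alpha}(\log\log\log(e^{e^e}+2^k))^{-\beta(1+2\eps)}$. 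The precise exponent $\tfrac12+\eps$ on the triple logarithm in $\varphi$ is tuned so that this auxiliary series converges for the relevant H\"older exponents, and the positive $\eps$ is the critical margin that closes the estimate. The hardest part will be precisely this balancing act: since $X$ sits right at the weak $L^{2/3}$ endpoint, the $q$-triangle inequality is essentially sharp, and the $\log(e+1/|A_k|)$ loss produced by each use of \eqref{Xbound1} has to be absorbed exactly by the iterated logarithms in $\varphi$ without any slack to spare.
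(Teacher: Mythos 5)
Your broad strategy — normalize $\|f_2\|_2=1$, decompose $f_1$ into its dyadic level sets $A_k$, apply \eqref{Xbound1} to each layer $f_1^{(k)}=2^{k+1}g_k$, and sum — is indeed the ``level set decomposition'' the paper has in mind; the paper's own proof of this theorem is a one-line citation to \cite[Example 3.12]{CGMS} applied to the linear operator $f_1\mapsto\V_{\mathbf{Qt}}(f_1,f_2)$ with \eqref{Xbound1} as input. So the decomposition and the layerwise ingredient are the right ones.

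The combining step, however — the part you yourself flag as ``the hardest part'' — is not resolved, and both of the routes you float appear to lose a polynomial (in the number of layers) factor that the iterated-log weight in $\varphi$ cannot absorb. Concretely: take $|A_k| = 2^{-k}/(K\psi(2^k))$ for $k\in[K,2K]$ and $A_k=\emptyset$ otherwise, where $\psi(t)=\varphi(t)/t$. This satisfies $\sum_k\varphi(2^k)|A_k|\sim 1$, and \eqref{Xbound1} gives each layer
$b_k:=\|\V_{\mathbf{Qt}}(f_1^{(k)},f_2)\|_X\lesssim 2^k|A_k|\log(\e+1/|A_k|)\sim K^{-1}(\log K)^{-1/2}(\log\log K)^{-(1/2+\eps)}$.
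One checks that $\sum_k b_k\lesssim 1$ (an $\ell^1$ bound), which would close if $X$ were normable — but $X$ is only a quasi-norm with quasi-triangle constant $2^{3/2}$. The Aoki--Rolewicz exponent for that constant is $q=\tfrac25$, and even the unproved, more optimistic $q=\tfrac23$ (what one would hope to inherit from $L^{2/3,\infty}$) gives a sum $(\sum_k b_k^{2/3})^{3/2}\sim K^{1/2}(\log K)^{-1/2}(\log\log K)^{-(1+2\eps)/3}\to\infty$. The rearrangement-partition route $V(t)\le\sum_k V_k(c_k t)$ with $\sum c_k=1$ is strictly worse: optimizing $\sum_k c_k^{-3/2}b_k$ yields an $\ell^{2/5}$-sum of the $b_k$, which diverges even faster. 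In short, summing the layerwise $X$-estimates by a generic lattice/quasi-norm inequality does not yield the theorem; the CGMS Example~3.12 argument must exploit the pointwise rearrangement estimate \eqref{logproprearr} and the disjointness of the level sets in a more delicate way than what you have written. As presented, your final series bound $\sum_k [2^k|A_k|\log(\e+1/|A_k|)]^q\lesssim 1$ with $q\le\tfrac23$ is simply false in general under the Orlicz constraint, so the proof has a genuine gap here. You would need to reproduce the precise mechanism of \cite[Example~3.12]{CGMS} (or supply an alternative combining argument tailored to $X$) to close it.
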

Theorem 4 is obtained by fixing $f_2$ with $\|f_2\|_2=1$ and straightforwardly applying the linear extrapolation theorem \cite[Theorem 2.1(b)]{CGMS} to the linear operator $f_1 \mapsto \V_{\mathbf{Qt}}(f_1,f_2)$, in view of estimate \eqref{Xbound1}. Even the weaker version of \eqref{Xbound1} where $f_1$ is taken to be a subindicator function would comply with the assumption thereof. Theorem 5 follows from (the full strength of) \eqref{Xbound1} via the level set decomposition argument of \cite[Example 3.12]{CGMS}, again applied to   the linear operator $f_1 \mapsto \V_{\mathbf{Qt}}(f_1,f_2)$.
\begin{remark}  Theorems \ref{logthm1} and \ref{logthm2} improve  respectively 
 the bounds
\begin{align*} &
V_\mathbf{Qt}: L^{1,\frac23} \log L^{ \frac43}(\R) \times L^{2,\frac23} \log L^{ \frac43}(\R)   \to  X, \\ & V_\mathbf{Qt}: L (\log L)^2 (\log\log L)^{\frac12} (\log\log\log L)^{\frac12+\eps}(\R) \times L^{2,\frac23} \log L^{ \frac43}(\R) \to X
\end{align*} proven in Section 4.1 of \cite{CGMS} (when restricted to the Walsh case).  In particular, we can always take one function in $L^2(\R)$. We also stress that \eqref{Xbound1} allows us to freeze the $L^2$ function and only employ  \emph{linear} extrapolation results to derive our Theorems \ref{logthm1} and \ref{logthm2}, in contrast with the approach of \cite{CGMS} where bilinear extrapolation is needed.
Finally, it is easy to see that the two substitutes for $L^1(\R)$ in Theorems \ref{logthm1} and \ref{logthm2} are not comparable; thus none of the two results can be obtained from the other. 
\end{remark}
\begin{remark} One might be interested in finding an Orlicz-Lorentz modification of the pair $L^1(\R) \times L^2(\R)$ which is mapped by $\V_\mathbf{Qt}$ into $L^{\frac23, \infty} (\R)$ (strictly smaller than the space $X$ appearing in Theorems \ref{logthm1} and \ref{logthm2}).
This can be done  
 by means of the following  restricted weak type estimate: given any two sets $F_1,F_2 \subset \R$ with $|F_1| \leq |F_2|$,
\begin{equation}
\lambda \big|\big\{x: \V_\mathbf{Qt} (f_1,f_2)(x) > \lambda\}\big|^{\frac32} \lesssim |F_1| |F_2|^{\frac12}    \log\big( \e+\textstyle\frac{|F_2|}{|F_1|}  \big) , \label{extrwr} \end{equation} for all $|f_1|\leq \cic{1}_{F_1}, \; |f_2|\leq \cic{1}_{F_2}$;
estimate \eqref{extrwr} is obtained by applying \eqref{mrwtmain} with the optimal choice of $p_1,p_2$ given by $ (p_1)'= \log\big(\e+ \textstyle\frac{|F_2|}{|F_1|}  \big)$. Then an application of the multilinear extrapolation Theorem 2.6 in \cite{CGMS},  yields the bound
$$
\V_\mathbf{Qt}: L^{1,\frac23} \log L^{1,\frac23}(\R) \times L^{2,\frac23} \log L^{1,\frac23}(\R) \to L^{\frac23,\infty}( \R).
$$
\end{remark}

\section{Analysis and combinatorics in the Walsh phase plane}
We refer to \cite{Th1,Ththesis} for more details about the results in this section.

Let us   introduce some more notation.  To simplify the combinatorial arguments, it is convenient to split  $\mathbf{Qt}$ into two subsets $\mathbf{Qt}_1$, $\mathbf{Qt}_2$ such that
\begin{equation}
\label{sepscl}
s, s' \in \mathbf{Qt}_j, |I_s| > |I_{s'}| \implies |I_s| \geq 4 |I_{s'}| .
\end{equation}
For the rest of the paper  we will  assume to be working with finite collections of quartiles $\S \subset \mathbf{Qt}_1.$

As described in the introductory section, we denote by $s_j$ the tiles $I_{s} \times \omega_{s_j}$, which we call \emph{frequency grandchildren} of $s$. Symmetrically, we will denote by $s^j$, $j=1,\ldots,4$ the tiles $I_s^{j} \times \omega_s$, where $I_s^j$ are the four dyadic  grandchildren of $I_s$, and will refer to them as \emph{spatial grandchildren} of $s$. Finally, we use the notations
$$
 \S_{\star}^{\star} = \S_{\star}\cup \S^{\star}:=\{s_j: s \in \S, \, j=1, \ldots, 4\} \cup \{s^j: s \in \S, \, j=1, \ldots, 4\}.
$$
and
$$
\sh(\S):= \bigcup_{s \in \S} s \subset \R \times \R_+.
$$
for  the \emph{shadow} of  a set of tiles (or quartiles) $\S$ in the phase plane.

\subsection{Trees and phase space projection}

We  will use the well-known Fefferman order relation on  quartiles:
\begin{equation}
\label{feff}
s \ll s' \iff I_s \subset I_{s'}\text{ and } \omega_{s} \supset \omega_{s'}.
\end{equation}
Note that, as a consequence of \eqref{walshorth}, if two quartiles $s,s'$ are not related under $\ll$ then    $$ \sigma \in \{s\}^\star_\star, \; \sigma' \in\{s'\}^\star_\star    \implies \l w_{\sigma}, w_{\sigma'}\r =0. $$

A collection $\S\subset \mathbf{Qt}_1$ is called \emph{convex} if
\begin{equation}
s, s'' \in \S, \,s' \in \mathbf{Qt}_1 \,, s  \ll s' \ll s'' \implies s' \in  \S.
\end{equation}
We will use repeatedly that the intersection of two convex sets is convex.

 A collection of  quartiles $\T\subset \mathbf{Qt}_1$  is called \emph{tree} with top quartile $s_\T\in \mathbf{Qt}_1$ if $s\ll s_\T$ for all $s \in \T$. We use the notation $I_\T:=I_{s_\T}, \omega_{\T}=\omega_{s_\T}.$
We say that a tree $\T$ is a $j$-tree (for some $j \in \{1,\ldots,4\}$) if $\omega_{\T} \subset \omega_{s_j}$ for all $s \in \T \backslash \{s_\T\}$. Note that if $\T$ is a $j$-tree the tiles  $\{s_k: k \in \{1,\ldots,4\} \backslash\{j\}, s \in \T\}$ are pairwise disjoint.

If $\S$ is a finite collection of pairwise disjoint tiles, define
$$
H_\S:= \mathrm{span}\{w_s: s \in \S\}
$$ as a subspace of $L^2(\R)$.

The lemma below is geared towards the subsequent definition of  the phase-space projection $\Pi_{\T}$ associated to a convex $j$-tree $\T$.
\begin{lemma} \label{psp1}  Let $\T$ be a  finite, convex $j$-tree, with $j \in \{1,\ldots,4\}$. Then there exists a finite set of pairwise disjoint tiles  $\T' \subset \T_\star^\star$ such that $\sh( \T)  = \sh( \T')$ and with  the further property
\begin{equation}
\label{spacedisj}
s',s'' \in \T', \, I_{s'} \cap I_{s''}\neq \emptyset \implies s'=s_{j}, s''=s_k \textrm{\emph{
for some }} s \in \T,
\end{equation}
\end{lemma}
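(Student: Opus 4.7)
The plan is to build $\T'$ by selecting, for each $s \in \T$, either its four frequency grandchildren or a suitable subset of its spatial grandchildren, according to whether $s$ is minimal in $\T$ (with respect to $I$-containment) or not. I first collect the structural consequences of the $j$-tree and convexity hypotheses. Since $\omega_s$ is the dyadic interval of length $4/|I_s|$ containing $\omega_{\T}$, it is uniquely determined by $|I_s|$; hence distinct quartiles of $\T$ have distinct spatial intervals $I_s$. Because $\T \subset \mathbf{Qt}_1$, the sizes $|I_s|$ belong to $\{4^{-m}|I_{s_\T}| : m \geq 0\}$. The key consequence of convexity is that every \emph{immediate child} of $s$ in $\T$---an $s' \in \T$ with $I_{s'}\subsetneq I_s$ and no quartile of $\T$ strictly between $s$ and $s'$ in the order $\ll$---must satisfy $|I_{s'}| = |I_s|/4$. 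Otherwise, at the intermediate scale $|I_s|/4$ one exhibits $s'' \in \mathbf{Qt}_1$ whose spatial interval is the unique dyadic grandchild of $I_s$ containing $I_{s'}$ (with frequency forced by the $j$-tree condition); then $s' \ll s'' \ll s$, and convexity places $s''$ in $\T$, a contradiction.

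Let $\mathrm{ch}(s)$ denote the set of immediate children of $s$ in $\T$, and set
\[
E_s \,:=\, I_s \setminus \bigcup_{s' \in \mathrm{ch}(s)} I_{s'}.
\]
By the previous step, each $I_{s'}$ with $s' \in \mathrm{ch}(s)$ coincides with some spatial grandchild $I_s^k$, so $E_s$ is itself a union of spatial grandchildren of $I_s$. A fiberwise inspection---the quartiles of $\T$ whose $I$ contains a given $x \in E_s$ are exactly $s$ and its proper ancestors, among which $\omega_s$ is the largest---yields the disjoint decomposition $\sh(\T) = \bigsqcup_{s \in \T} E_s \times \omega_s$. I then define $\T'$ as follows: for $s \in \T$ with $\mathrm{ch}(s) = \emptyset$ include the four frequency grandchildren $\{s_1,s_2,s_3,s_4\}$, while for $s \in \T$ with $\mathrm{ch}(s) \neq \emptyset$ include the spatial grandchildren $\{s^k : I_s^k \subset E_s\}$. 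In each case the piece $E_s \times \omega_s$ is exactly tiled by the corresponding tiles, so $\sh(\T') = \sh(\T)$.

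The delicate step is pairwise disjointness, which reduces to showing that two tiles $\tau, \tau' \in \T'$ arising from distinct $s, t \in \T$ have disjoint spatial intervals. Assume without loss of generality $|I_s| < |I_t|$, so $|I_s| \leq |I_t|/4$ by the $\mathbf{Qt}_1$ condition. If $I_s \cap I_t = \emptyset$, we are done; otherwise $I_s \subsetneq I_t$. Let $I_t^{l'}$ be the unique spatial grandchild of $I_t$ containing $I_s$. The very same convexity argument as in the first paragraph (trivially if $|I_s| = |I_t|/4$, so that $s$ itself is the immediate child at position $l'$; by producing an intermediate quartile otherwise) shows that some quartile of $\T$ has spatial interval $I_t^{l'}$, hence lies in $\mathrm{ch}(t)$, so $I_t^{l'} \not\subset E_t$ and $t^{l'} \notin \T'$. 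Since $s$ descends from $t$, the quartile $t$ cannot be minimal, so $\tau'$ must be of the form $t^l$ with $l \neq l'$; then $I_{\tau'} = I_t^l$ is disjoint from $I_t^{l'} \supset I_s \supset I_\tau$, giving $I_\tau \cap I_{\tau'} = \emptyset$. The further property of the lemma statement is then automatic: the only pairs of tiles in $\T'$ whose spatial intervals meet (in fact coincide) are frequency grandchildren of a common minimal $s \in \T$. The main obstacle throughout is precisely the convexity-driven exclusion of hidden descendants beneath an ``unused'' grandchild slot; once this is in hand, the rest is dyadic bookkeeping.
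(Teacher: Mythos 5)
Your proof is correct, and it takes a genuinely different route from the paper's. The paper argues by induction on $\#\T$: it peels off a $\ll$-minimal quartile $s$, applies the inductive hypothesis to $\dot\T = \T\setminus\{s\}$, identifies the unique immediate parent $\sigma$ of $s$ in $\dot\T$ (this is where convexity forces $|I_\sigma|=4|I_s|$ and the $j$-tree property forces $\omega_{s_j}=\omega_\sigma$), and then patches $\dot\T'$ by swapping $\{\sigma\}_\star$ for $\{\sigma\}^\star\cup\{s\}_\star$. You instead build $\T'$ in one shot from the partition $\sh(\T)=\bigsqcup_{s\in\T}E_s\times\omega_s$, where $E_s$ is the part of $I_s$ not covered by the immediate children of $s$, and tile each piece by frequency grandchildren (if $s$ is $\ll$-minimal, $E_s=I_s$) or by the unused spatial grandchildren (otherwise). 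Both arguments pivot on the same structural fact extracted from convexity and the $\mathbf{Qt}_1$ scale separation: immediate children sit exactly one scale below, so $E_s$ is a union of spatial grandchildren, and no quartile of $\T$ can ``hide'' at an intermediate scale. Your version makes the global picture of the resulting tiling explicit, which arguably clarifies both $\sh(\T')=\sh(\T)$ and why the only spatial overlaps in $\T'$ occur among the four frequency grandchildren of a common minimal quartile; the paper's induction is shorter on the page but leaves the global structure implicit. One small omission: you take WLOG $|I_s|<|I_t|$ in the disjointness step, so you should note separately that if $|I_s|=|I_t|$ and $s\neq t$ then $I_s\cap I_t=\emptyset$ (distinct dyadic intervals of equal length are disjoint, and you already observed that distinct quartiles in $\T$ have distinct $I_s$), which handles that case trivially.
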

\begin{proof} We proceed by induction on the number of quartiles. The base case $\#\T=1$ is trivially true. Let now $\T$ be a given finite convex $1$-tree (to fix ideas). Choose a minimal (with respect to $\ll$) quartile $s \in \T$. Then $\dot \T:=\T \backslash \{s\}$ is a convex $1$-tree. By induction assumption, we find a collection of pairwise disjoint tiles $\dot\T' \subset \dot\T^\star_\star$ with $\sh(\dot \T')=\sh(\dot \T)$ and such that \eqref{spacedisj} holds. Let $I$ be the unique interval in $\{I_{s'}: s' \in \dot \T'\}$ such that $I_s \subsetneq I$. Since $\dot \T$ is a convex 1-tree, there exists a unique quartile $\sigma \in \dot \T$ such that $I_{\sigma}=I$ and $|I_\sigma|=4 |I_s|$. Moreover $\omega_{s_1} =\omega_{\sigma }$. Setting $$
\T'=  \big(\dot \T' \backslash \{\sigma\}_{\star} \big) \cup \{\sigma\}^\star \cup \{s\}_\star$$ we   see that $\sh(\T')=\sh(\T)$ and that $\T'$ satisfies \eqref{spacedisj}. The induction is complete.    \end{proof}
For   a finite convex $j$-tree $\T$, let  $\T'$ be the corresponding collection of pairwise disjoint tiles given by Lemma
\ref{psp1}. The wave packets $\{w_s: s \in \T'\}$ are an orthonormal basis of $H_{\T'}$, due to the corresponding tiles in $\T'$ being pairwise disjoint. Thus the orthogonal projection $\Pi_\T:L^2(\R) \to H_{\T'}$ can be written as
\begin{equation}
\label{psp}
\Pi_{ \T} f = \sum_{s \in \T'} \l f, w_s\r w_s.
\end{equation}
In particular, if $s$ is a quartile, then
$$\Pi_sf=\sum_{j=1}^4\l f, w_{s_j}\r w_{s_j}$$
\label{convlemma}
\begin{lemma} \label{psp2} Let $\T$ be a convex $j$-tree. We have the equality
\begin{equation}
\l f, w_s \r = \l\Pi_\T f, w_s \r \qquad \forall s \in \T^\star_\star.
\end{equation}
\end{lemma}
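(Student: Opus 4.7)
The plan is to reduce the identity to the single membership statement $w_s \in H_{\T'}$ using the self-adjointness of $\Pi_\T$, and then to invoke standard Walsh phase-plane combinatorics to conclude.

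Concretely: since $\Pi_\T: L^2(\R) \to H_{\T'}$ is an orthogonal projection it is self-adjoint, so
\[
\l \Pi_\T f, w_s\r = \l f, \Pi_\T w_s\r, \qquad f \in L^2(\R).
\]
Thus the identity in the lemma is equivalent to $\Pi_\T w_s = w_s$, that is, $w_s \in H_{\T'}$. By the very definition of $\T^\star_\star$, any $s \in \T^\star_\star$ is a frequency or spatial grandchild of some $q \in \T$, and therefore $s \subset q$ as subsets of the phase plane $\R \times \R_+$. Since Lemma \ref{psp1} provides $\sh(\T') = \sh(\T)$, we obtain $s \subset \sh(\T')$. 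The task has thus been reduced to the following general assertion: if $\S$ is any finite pairwise disjoint collection of tiles and $\tau$ is any tile with $\tau \subset \sh(\S)$, then $w_\tau \in H_\S$.

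The main step is this general assertion. I would prove it by induction on $\#\S$; the base case $\#\S=1$ is trivial because two unit-area dyadic rectangles one of which contains the other must coincide. In the inductive step one may assume $\tau \notin \S$, so that $\tau$ meets at least two distinct elements of $\S$; a direct combinatorial inspection of the phase plane then yields a pair $\sigma_1, \sigma_2 \in \S$ whose union $\sigma_1 \cup \sigma_2$ is itself a dyadic rectangle of area $2$. Such a rectangle admits exactly two natural pairwise disjoint unit-area tile partitions, and the elementary identity of the form $w_{\sigma'} = \tfrac{1}{\sqrt 2}(w_{\sigma_1} \pm w_{\sigma_2})$ (valid for each element $\sigma'$ of the other partition) shows that swapping $\{\sigma_1, \sigma_2\}$ in $\S$ for the opposite partition leaves $H_\S$ unchanged. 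Iterating such swaps produces a pairwise disjoint tile partition of $\sh(\S)$ that eventually contains $\tau$, at which point $w_\tau \in H_\S$ is immediate. I expect the main (routine) obstacle to be the bookkeeping certifying that this iteration terminates with $\tau$ in the partition, but this is standard Walsh phase-plane combinatorics relying only on the orthogonality \eqref{walshorth}.
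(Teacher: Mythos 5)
Your proof is correct and in spirit the same as the paper's: both reduce Lemma~\ref{psp2} to the single membership statement $w_s \in H_{\T'}$, which the paper then disposes of by citing the Walsh recursion relation (Thiele's thesis, Cor.~1.10). What you do differently is replace that citation with a self-contained induction proving the \emph{a priori} stronger assertion that whenever $\S$ is a finite pairwise disjoint tile family and $\tau\subset\sh(\S)$ is a tile, then $w_\tau\in H_\S$; this buys transparency and a statement that is reusable outside the tree context, at the cost of some combinatorics.

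The ``bookkeeping'' you flag does in fact close cleanly, and the one non-obvious step (existence of the sibling pair $\sigma_1,\sigma_2\in\S$) deserves a word. If $\tau\notin\S$, the tiles of $\S$ that meet $\tau$ cannot be of mixed type: if $s_i$ had $I_{s_i}\subsetneq I_\tau$ while $s_j$ had $I_\tau\subsetneq I_{s_j}$, then $\omega_{s_i}\supsetneq\omega_\tau\supsetneq\omega_{s_j}$ and $I_{s_i}\subsetneq I_{s_j}$, so $s_i\cap s_j\supset I_{s_i}\times\omega_{s_j}\neq\emptyset$, violating disjointness. Hence (after possibly swapping the roles of $I$ and $\omega$) every $s_i$ meeting $\tau$ has $I_\tau\subsetneq I_{s_i}$ and $\omega_{s_i}\subsetneq\omega_\tau$; since $\tau\subset\sh(\S)$ and $\S$ is disjoint, the $\omega_{s_i}$ disjointly tile $\omega_\tau$, and a shortest one among them has its dyadic sibling present in the list. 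The two corresponding tiles then have equal spatial intervals and sibling frequency intervals, so their union is a dyadic area-$2$ rectangle, and after the swap one of the two new tiles has its spatial interval disjoint from $I_\tau$. Thus each swap strictly decreases the number of tiles of $\S$ meeting $\tau$, the iteration terminates with $\tau\in\S$, and the general assertion --- hence the lemma --- follows.
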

\begin{proof}  This follows immediately from the fact that $ s \in \T^\star_\star $ implies $w_s \in H_{\T'}$. In turn, this is a consequence of the recursion relation for Walsh wave packets. See \cite[Corollary 1.10]{Ththesis}. \end{proof}


\section{Sizes, trees and single tree estimates}
For an interval $I \subset \R$, we adopt the notation
\begin{equation}
\label{normLp} \|f\|_{L^p(I)}:= \bigg( \int_{I}|f(x)|^p \frac{\d x}{|I|}\bigg)^{\frac1p}, \qquad 1 \leq p \leq
\infty
\end{equation}
Accordingly, we define the dyadic maximal functions
\begin{equation}
\label{mp}  \M_p f(x) = \sup_{x \in I \in \D}  \|f\|_{L^p(I)}, \qquad 1 \leq p <\infty.
\end{equation}

Let $f \in L^2(\R)$  and let $\S$ be a set of quartiles. We set
$$
\size_{f} (\S) = \sup_{s \in \S}   \frac{\| \Pi_{s}f  \|_2}{\sqrt{|I_s|}}.
 $$
Note that$$
\size_{f} (\S) \sim \sup_{s \in \S^\star_\star}   \frac{|\l f, w_{s} \r |}{\sqrt{|I_s|}}.
 $$
It is immediate to see that
 \begin{equation}
\label{ubsize}
\size_f(\S) \leq \sup_{s\in \S} \inf_{x \in I_s} M_1f(x).
\end{equation}
\begin{lemma}Let $\T$ be a convex tree, $j=1, \ldots, 4$, and  $\T_j =\{s \in \T: \omega_{s_j} \supseteq \omega_{s_\T} \}$. We have the John-Nirenberg type inequality
 \begin{equation} \label{JN}
\big\|\Pi_{\T_j} f\big\|_p \leq 4 |I_\T|^{\frac1p} \size_f(\T), \qquad \forall\; 1 \leq p \leq \infty.
\end{equation}
\end{lemma}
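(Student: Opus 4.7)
The natural strategy is to invoke Lemma \ref{psp1} on $\T_j$, viewed as a (convex) $j$-tree with top $s_\T$, to extract a pairwise disjoint collection of tiles $\T_j' \subset (\T_j)^\star_\star$ with $\sh(\T_j') = \sh(\T_j)$ and obeying the near-disjointness property \eqref{spacedisj}. The phase-space projection then admits the orthogonal representation
$$
\Pi_{\T_j} f \;=\; \sum_{s' \in \T_j'} \langle f, w_{s'}\rangle w_{s'},
$$
and I would prove the endpoint cases $p = 2$ and $p = \infty$ separately, interpolating for the intermediate values.

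The combinatorial heart of the argument is the inequality $\sum_{s' \in \T_j'} |I_{s'}| \leq 4 |I_\T|$. Property \eqref{spacedisj} forces any two tiles in $\T_j'$ whose spatial intervals meet to share the \emph{same} spatial interval $I_s$ (they must be frequency grandchildren of a common $s \in \T_j$), so the distinct values of $I_{s'}$ are pairwise disjoint dyadic subintervals of $I_\T$, each shared by at most four tiles. Summing first over the (at most four) tiles with a given common $I$ and then over distinct $I$'s yields the bound. Plugging into Bessel/Parseval gives the $p=2$ case:
$$
\|\Pi_{\T_j} f\|_2^{\,2} = \sum_{s' \in \T_j'} |\langle f, w_{s'}\rangle|^2 \;\lesssim\; \size_f(\T)^2 \sum_{s' \in \T_j'} |I_{s'}| \;\lesssim\; |I_\T|\,\size_f(\T)^2,
$$
where I used $s' \in (\T_j)^\star_\star \subseteq \T^\star_\star$ so that the defining bound $|\langle f, w_{s'}\rangle| \lesssim \size_f(\T) |I_{s'}|^{1/2}$ applies (for spatial grandchildren this follows from Bessel applied to $\Pi_s$). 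For the $p = \infty$ case I fix $x$ and observe that only tiles $s' \in \T_j'$ with $x \in I_{s'}$ contribute, and by the same appeal to \eqref{spacedisj} these all have a single common spatial interval, numbering at most four; since each such term satisfies $|\langle f, w_{s'}\rangle\, w_{s'}(x)| \lesssim \size_f(\T) |I_{s'}|^{1/2} \cdot |I_{s'}|^{-1/2} = \size_f(\T)$, the pointwise bound $|\Pi_{\T_j} f(x)| \leq 4\,\size_f(\T)$ follows.

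For the full range $1 \leq p \leq \infty$ I would interpolate: when $p \geq 2$, log-convexity $\|g\|_p \leq \|g\|_2^{2/p}\|g\|_\infty^{1-2/p}$ combines the two endpoints; when $1 \leq p \leq 2$, Hölder against $\cic{1}_{I_\T}$ together with the $p=2$ bound suffices, since $\Pi_{\T_j} f$ is supported in $\bigcup_{s' \in \T_j'} I_{s'} \subseteq I_\T$. The main technical point is really the combinatorial bookkeeping at the heart of the argument, i.e.\ the passage from $\sum_{s'} |I_{s'}|$ and the ``at most four tiles above $x$'' count to \eqref{spacedisj}; once this is in place, the $L^2$, $L^\infty$, and interpolated $L^p$ estimates fall out immediately from routine Hilbert-space and interpolation machinery, and no further structural information about the tree $\T_j$ is needed.
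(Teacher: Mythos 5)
Your argument is correct, but it takes a detour the paper avoids. The paper's proof observes that $\Pi_{\T_j}f$ is supported on $I_\T$ and then establishes \emph{only} the $L^\infty$ bound $\|\Pi_{\T_j}f\|_\infty \leq 4\,\size_f(\T)$ (your $p=\infty$ argument, via \eqref{spacedisj} and the ``at most four tiles over a point'' count, is identical to theirs); the $L^p$ bound for every $1\le p\le\infty$ then follows in one line from $\|g\|_p\le|\mathrm{supp}\,g|^{1/p}\|g\|_\infty$. Your separate $L^2$ estimate via Bessel together with the overlap bound $\sum_{s'\in\T_j'}|I_{s'}|\le 4|I_\T|$, followed by log-convexity for $p\ge 2$ and H\"older for $p\le 2$, is a valid and self-contained alternative --- the combinatorics you extract from \eqref{spacedisj} are accurate --- but it is redundant: you already invoke the containment $\mathrm{supp}\,\Pi_{\T_j}f\subseteq I_\T$ for the range $1\le p\le 2$, and that same containment combined with the $L^\infty$ estimate alone dispatches every $p$ simultaneously, making the $L^2$ computation and the interpolation steps unnecessary scaffolding. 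One further minor point worth flagging: the lemma's constant $4$ comes cleanly out of the $L^\infty$-plus-support argument, whereas the Bessel route only produces an implicit $O(1)$ constant; this does not matter for the paper's downstream use, but it is another small reason to prefer the direct route.
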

\begin{proof} Since $\Pi_{\T_j} f$ is supported on $I_{\T}$, it suffices to show that
$$
\big\|\Pi_{\T_j} f\big\|_\infty \leq 4\, \size_f(\T).
$$
Note that ${\T_j}$ is also a convex tree. Fix  $x \in I_\T$. Due to \eqref{spacedisj}, we have that $x \in I_s$ for at most four tiles $s \in (\T_{j})'$. Thus
$$
|\Pi_{\T_j} f(x)| \leq 4   \sup_{s \in (\T_{j})' } |\l f, w_s \r| |w_s(x)| \leq 4\, \size_f (\T)
$$
as claimed. The lemma follows.
\end{proof}

The John-Nirenberg inequality above is all we need to produce an estimate on the trilinear form \eqref{trf} restricted to a tree.
\begin{lemma}Let $\T$ be a convex tree. Then
\begin{equation} \label{ste}
|\tr_\T(f_1,f_2,f_3)| \lesssim |I_\T|\Big( \prod_{j=1}^3 \size_{f_j}(\T) \Big)
\end{equation} \label{stelemma}
\end{lemma}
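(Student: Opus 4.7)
The plan is to partition $\T$ according to which grandchild of $\omega_s$ contains $\omega_\T$, bound each piece separately, and invoke the John--Nirenberg bound \eqref{JN} as the central tool. Because $\T\subset\mathbf{Qt}_1$, \eqref{sepscl} yields $|\omega_s|\geq 4|\omega_\T|$ for every $s\in\T\setminus\{s_\T\}$, so $\omega_\T$ is contained in a unique dyadic grandchild $\omega_{s_{j(s)}}$ of $\omega_s$. Setting $\T_j=\{s\in\T:\omega_{s_j}\supseteq\omega_\T\}$ as in the preceding lemma, this produces the disjoint decomposition $\T=\{s_\T\}\sqcup\T_1\sqcup\T_2\sqcup\T_3\sqcup\T_4$ and hence
$$
\tr_\T(f_1,f_2,f_3)=\frac{1}{\sqrt{|I_\T|}}\prod_{k=1}^3\langle f_k,w_{(s_\T)_k}\rangle+\sum_{j=1}^4\tr_{\T_j}(f_1,f_2,f_3).
$$
The top-quartile term is immediate: the size bound $|\langle f_k,w_{(s_\T)_k}\rangle|\leq\sqrt{|I_\T|}\,\size_{f_k}(\T)$ combined with the prefactor $|I_\T|^{-1/2}$ bounds it by $|I_\T|\prod_{k=1}^3\size_{f_k}(\T)$.

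The crux is the estimate $|\tr_{\T_j}|\lesssim|I_\T|\prod_k\size_{f_k}(\T)$, which rests on the orthogonality claim that for every $k\neq j$ the tiles $\{s_k:s\in\T_j\}$ are pairwise disjoint. Indeed, if $s\neq s'$ lie in $\T_j$ and $I_s\cap I_{s'}=\emptyset$, the tiles $s_k,s'_k$ are trivially disjoint; otherwise \eqref{sepscl} forces strict nesting, say $\omega_{s'}\subsetneq\omega_s$ with $|\omega_{s'}|\leq|\omega_s|/4=|\omega_{s_j}|$, and since both $\omega_{s'}$ and $\omega_{s_j}$ contain $\omega_\T$, dyadic nesting gives $\omega_{s'}\subseteq\omega_{s_j}$. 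Hence $\omega_{s'_k}\subseteq\omega_{s_j}$ is disjoint from $\omega_{s_k}$ for every $k\neq j$, and by \eqref{walshorth} the system $\{w_{s_k}\}_{s\in\T_j}$ is orthonormal.

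With the disjointness in hand, fix $k_0=j$ when $j\leq 3$ and any $k_0\in\{1,2,3\}$ otherwise. Absorbing the $|I_s|^{-1/2}$ prefactor via the size bound gives
$$
|\tr_{\T_j}|\leq\sum_{s\in\T_j}\frac{|\langle f_{k_0},w_{s_{k_0}}\rangle|}{\sqrt{|I_s|}}\prod_{k\neq k_0}|\langle f_k,w_{s_k}\rangle|\leq\size_{f_{k_0}}(\T)\sum_{s\in\T_j}\prod_{k\neq k_0}|\langle f_k,w_{s_k}\rangle|,
$$
and a Cauchy--Schwarz in $s$ reduces matters to controlling $\sum_{s\in\T_j}|\langle f_k,w_{s_k}\rangle|^2$ for the two remaining indices, each of which satisfies $k\neq j$ by construction. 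Rewriting $\langle f_k,w_{s_k}\rangle=\langle\Pi_{\T_j}f_k,w_{s_k}\rangle$ via Lemma~\ref{psp2} and applying Bessel together with the John--Nirenberg estimate \eqref{JN} gives
$$
\Big(\sum_{s\in\T_j}|\langle f_k,w_{s_k}\rangle|^2\Big)^{1/2}\leq\|\Pi_{\T_j}f_k\|_2\leq 4\sqrt{|I_\T|}\,\size_{f_k}(\T),
$$
so that multiplying the three factors produces $|\tr_{\T_j}|\lesssim|I_\T|\prod_k\size_{f_k}(\T)$. Summing over $j$ and combining with the top-quartile estimate closes the proof. The only genuinely delicate point is the pairwise disjointness of $\{s_k:s\in\T_j\}$ for $k\neq j$; everything downstream is a routine Cauchy--Schwarz combined with the John--Nirenberg inequality, which was prepared precisely for this application.
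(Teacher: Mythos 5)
Your proof is correct and follows essentially the same route as the paper's: decompose $\T$ into $\{s_\T\}\cup\T_1\cup\cdots\cup\T_4$, apply the size bound to one factor and Cauchy--Schwarz to the other two, justified by disjointness of $\{s_k:s\in\T_j\}$ for $k\ne j$ and the John--Nirenberg estimate \eqref{JN} (via the phase-space projection identity of Lemma~\ref{psp2}). The only difference is cosmetic: you spell out the disjointness argument and explicitly flag the choice of $k_0$ when $j=4$, whereas the paper carries out $j=1$ and leaves the remaining cases implicit.
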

\begin{proof}
We argue separately for each $\T_j=\{s \in \T:   \omega_{\T} \subset \omega_{s_j}\} $. The argument for $s_\T$ is even simpler. Assume $j=1$ for simplicity. We have
\begin{align*}
|\tr_{\T_1}(f_1,f_2,f_3)| &= |\tr_{\T_1}\big(\Pi_{\T_1} f_1, \Pi_{\T_1} f_2,\Pi_{\T_1} f_3\big)| \\& \le \sum_{s \in\T_1 } |\frac{\l \Pi_{\T_1}f_1, w_{s_1} \r}{\sqrt{|I_s|}}  \l\Pi_{\T_1} f_2, w_{s_2} \r \l \Pi_{\T_1}f_3, w_{s_1} \r| \\ &\leq \Big( \sup_{s \in \T_1} \frac{|\l \Pi_{\T_1}f_1, w_{s_1} \r|}{\sqrt{|I_s|}} \Big)  \Big( \sum_{s \in \T_1}  |\l\Pi_{\T_1} f_2, w_{s_2} \r|^2  \Big)^{\frac12}  \Big( \sum_{s \in \T_1}  |\l \Pi_{\T_1} f_3, w_{s_3} \r|^2  \Big)^{\frac12}  \\& \leq \size_{f_1}(\T_1)\|\Pi_{\T_1} f_2\|_2 \|\Pi_{\T_1} f_3\|_2  \lesssim      |I_\T|\Big( \prod_{j=1}^3 \size_{f_j}(\T) \Big).
\end{align*}
We have used that $\{w_{s_j}: s\in \T_1\}$ is an orthonormal set for $j=2,3$, since the corresponding tiles $\{s_j:s \in \T_1\}$ are pairwise disjoint, and  \eqref{JN} to conclude.
\end{proof}A collection of quartiles $\S$ is called a \emph{forest}  if $\S$ can be partitioned in (pairwise disjoint) convex trees $\{\T: \T \in \F\}$. It may be that a given $\S$ may admit many such partitions $\F$. We define
$$
\tops(\S) = \inf_{\F}\sum_{\T \in \F} |I_\T|.
$$

The  next lemma skims the trees with large relative size off the collection $\S$, organizing them into a forest. \begin{lemma} \label{sizelemma}
Let $\S$ be a finite convex collection of quartiles with $\size_f(\S)\leq\sigma$. Then
$
\S= \S_{\mathsf{hi}} \cup \S_{\mathsf{lo}}
,$
such that \begin {itemize}
\item[$\cdot$]both $\S_{\mathsf{lo}}$ and $ \S_{\mathsf{hi}} $ are convex;
\item[$\cdot$]$\size_f(\S_{\mathsf{lo}} )\leq\frac\sigma2$;
\item[$\cdot$] $ \S_{\mathsf{hi}} $ is a forest with
$
\tops( \S_{\mathsf{hi}})  \lesssim \sigma^{-2}\|f\|^2_2.
$  \end{itemize}
\end{lemma}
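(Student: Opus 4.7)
The plan is a standard greedy \emph{stopping-time} selection to peel off trees of high size, combined with an orthogonality (Bessel-type) bound on their tops, using crucially the orthogonality statement following \eqref{feff}.

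First, I iterate as follows. Initialize $\S^{(0)}=\S$. At step $n\geq 0$, if $\size_f(\S^{(n)})\leq\sigma/2$, I stop and set $\S_{\mathsf{lo}}:=\S^{(n)}$. Otherwise, among all quartiles $s\in\S^{(n)}$ with $\|\Pi_s f\|_2/\sqrt{|I_s|}>\sigma/2$, I select $\hat s_n$ with \emph{maximal} $|I_{\hat s_n}|$ (equivalently, minimal $|\omega_{\hat s_n}|$), breaking ties arbitrarily. I then form
$$
\T_n := \big\{ s\in\S^{(n)} : s\ll\hat s_n\big\},\qquad \S^{(n+1)}:=\S^{(n)}\setminus\T_n,
$$
and finally I set $\S_{\mathsf{hi}}:=\bigcup_n \T_n$. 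Since $\S$ is finite, the process terminates. By construction each $\T_n$ is a tree with top $\hat s_n$, the collection $\{\T_n\}$ partitions $\S_{\mathsf{hi}}$, and $\size_f(\S_{\mathsf{lo}})\leq\sigma/2$.

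Next, I need to check convexity of all three pieces $\T_n$, $\S_{\mathsf{hi}}$, $\S_{\mathsf{lo}}$. The key observation is that the removal rule respects the Fefferman order: if $s\ll s'\ll s''$ (with $s'\in\mathbf{Qt}_1$) and $s'' \ll \hat s_m$, then $s\ll \hat s_m$; so $s$ would be removed no later than $s''$. Tracing this through each case shows that if $s,s''$ lie in the same piece and $s'$ is a $\ll$-intermediate $\mathbf{Qt}_1$ quartile (automatically in $\S$ by convexity of $\S$), then $s'$ must lie in the same piece. In particular, inductively each $\T_n$ is a convex tree, and the pieces $\S_{\mathsf{hi}},\S_{\mathsf{lo}}$ are convex.

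Finally, the key point is the top-count bound. I claim the selected $\hat s_n$ are pairwise $\ll$-incomparable. If $\hat s_m\ll\hat s_n$ with $n<m$, then $\hat s_m\in\S^{(n)}$ would already lie in $\T_n$, contradicting $\hat s_m\in\S^{(m)}$. If $\hat s_n\ll\hat s_m$ with $n<m$, then $\hat s_m$ is a candidate at step $n$ with $|I_{\hat s_m}|\geq|I_{\hat s_n}|$, and strict inequality holds unless $\hat s_n=\hat s_m$ (two quartiles with the same $I$ and one's $\omega$ contained in the other's must coincide, by area $=4$); so the maximality choice at step $n$ is violated. Hence, by the orthogonality statement recalled after \eqref{feff}, the spaces $\mathrm{range}(\Pi_{\hat s_n})\subset H_{\{\hat s_n\}^\star_\star}$ are mutually orthogonal, and Bessel's inequality yields
$$
\sum_n \|\Pi_{\hat s_n}f\|_2^{2}\leq\|f\|_2^{2}.
$$
Combined with $\|\Pi_{\hat s_n}f\|_2^{2}\geq (\sigma/2)^{2}|I_{\hat s_n}|$, this gives $\sum_n|I_{\hat s_n}|\lesssim\sigma^{-2}\|f\|_2^2$, and therefore $\tops(\S_{\mathsf{hi}})\leq\sum_n|I_{\T_n}|\lesssim\sigma^{-2}\|f\|_2^2$, completing the proof.

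The main subtlety is not any single computation but tracking that the maximality rule (largest $|I|$) simultaneously enforces convexity of $\S_{\mathsf{hi}}$ and $\S_{\mathsf{lo}}$ \emph{and} guarantees the incomparability of the selected tops needed for the orthogonal-sum estimate; a different maximality rule, e.g.\ choosing the minimal spatial interval, would destroy one or the other.
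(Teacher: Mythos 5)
Your proof is correct and follows essentially the same greedy stopping-time argument as the paper: iteratively select a top quartile $t$ with large $\|\Pi_t f\|_2/\sqrt{|I_t|}$ (you maximize $|I_t|$, the paper selects a $\ll$-maximal one, but these lead to the same incomparability of tops), strip off the tree below it, and bound $\sum |I_t|$ by Bessel's inequality applied to the mutually orthogonal projections $\Pi_t f$. The convexity bookkeeping you sketch is the same implicit in the paper's remark that each $\T(t)$ is convex because $\S_{\mathsf{stock}}$ is.
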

\begin{proof} This is a recursive procedure. Set $\S_{\mathsf{stock}}:=\S$. Select a maximal (with respect to $\ll$) quartile  $t \in \S_{\mathsf{stock}}$ such that
\begin{equation} \label{ubsize2}
\|\Pi_t f\|_2 > \textstyle\frac{\sigma}{2} \sqrt{|I_t|}
\end{equation}
Define
$$
\T(t) = \{s \in \S_{\mathsf{stock}}: s \ll t\}.
 $$
and note that since $\S_{\mathsf{stock}}$ is convex, $\T(t)$ is a convex tree. Add $\T(t)$ to the family $\F$. Reset the new value $\S_{\mathsf{stock}}:=\S_{\mathsf{stock}}\setminus\{s:s\in \T(t)\}$, and restart the procedure.

The algorithm is over when there is no $t$ to be selected. Then define $ \S_{\mathsf{hi}} $ to consist of the union of  all the selected trees in $\F$, and $\S_{\mathsf{lo}}=\S\setminus\S_{\mathsf{hi}}$.

  The first two needed properties are easy to check.
 By maximality the selected quartiles $t$ are pairwise disjoint, and thus the functions $\Pi_tf$ are pairwise orthogonal, thanks to \eqref{walshorth}. It follows that
$$
\sum_{\T \in \F}|I_\T|=\sum_{t} |I_t| \leq 4 \sigma^{-2} \sum_{t} \|\Pi_tf\|_2^2  \leq 4\, \sigma^{-2}\|f\|^2_2.
$$
\end{proof}
We will decompose the model sum \eqref{trf} by organizing the quartiles of $\S$ into forests of definite size. The lemma below turns the tree estimate \eqref{ste} into a forest estimate.
\begin{lemma}   \label{lemmasfe}  Let $\S$ be a finite convex collection  of quartiles which is also a forest, with
$$
\size_{f_j}(\S)=\sigma_j, \;j\in\{1,2,3\}, \qquad
\tops(\S) \lesssim \sigma_2^{-2}\|f_2\|^2_2.
$$
Then
\begin{equation} \label{sferef}
|\tr_\S(f_1,f_2,f_3)| \lesssim  \sigma_3 \|f_1\|_2 \|f_2\|_2
\end{equation}
\end{lemma}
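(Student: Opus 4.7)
The plan is to sum the single-tree estimate of Lemma \ref{stelemma} over a near-optimal tree partition of $\S$, after first refining the per-tree bound so that two of the three size factors are traded for $L^2$ norms of phase-space projections of $f_1$ and $f_2$. The mechanism will be Cauchy--Schwarz in the tree index combined with the tops hypothesis and the almost-orthogonality of the projections $\Pi_{\T_j}$ across disjoint convex trees.

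Concretely, I would fix a partition $\F$ of $\S$ into pairwise disjoint convex trees realizing $\tops(\S)$ up to a constant, so that $\sum_{\T\in\F}|I_\T|\lesssim\sigma_2^{-2}\|f_2\|_2^2$. For each $\T\in\F$ I decompose $\T=\{s_\T\}\cup\bigcup_{j=1}^4 \T_j$ with $\T_j=\{s\in\T: \omega_\T\subset\omega_{s_j}\}$. On each $\T_j$ with $j\in\{2,3,4\}$ I would place the size bound on the ``singular'' index (the one along which $\{w_{s_j}\}_{s\in\T_j}$ fails to be orthonormal), then invoke orthogonality of the remaining two collections to factor a Cauchy--Schwarz product, and finally apply the John--Nirenberg inequality \eqref{JN} to one of the surviving $L^2$ norms, obtaining a uniform per-piece bound of the shape
$$|\tr_{\T_j}(f_1,f_2,f_3)|\lesssim \sigma_2\sigma_3|I_\T|^{1/2}\|\Pi_{\T_j}f_1\|_2,\qquad j\in\{2,3,4\}.$$
For the top $\{s_\T\}$, placing the size on $f_3$ and using disjointness of the tops $s_\T$ across $\T\in\F$, the collections $\{w_{s_{\T,j}}\}_{\T\in\F}$ are orthonormal for each $j$, so a direct Cauchy--Schwarz summation over $\T$ yields $\sigma_3\|f_1\|_2\|f_2\|_2$ without needing the tops hypothesis.

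The next step is summation of the displayed per-piece bound over $\T\in\F$, via Cauchy--Schwarz in the tree index:
$$\sum_{\T\in\F}|I_\T|^{1/2}\|\Pi_{\T_j}f_1\|_2\leq \Big(\sum_\T|I_\T|\Big)^{1/2}\Big(\sum_\T\|\Pi_{\T_j}f_1\|_2^2\Big)^{1/2}\lesssim \sigma_2^{-1}\|f_2\|_2\,\|f_1\|_2,$$
the first factor being controlled by the tops hypothesis and the second by the almost-orthogonality of the phase-space projections $\Pi_{\T_j}$ across disjoint convex trees (a consequence of \eqref{walshorth} via Lemma \ref{psp1}, which identifies each $\Pi_{\T_j}$ with the orthogonal projection onto the span of a set of pairwise disjoint Walsh wave packets). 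Multiplying by the prefactor $\sigma_2\sigma_3$ matches the target $\sigma_3\|f_1\|_2\|f_2\|_2$.

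The main obstacle will be the $\T_1$ piece, where $\{w_{s_1}\}_{s\in\T_1}$ is nested rather than pairwise disjoint, so the Cauchy--Schwarz/orthogonality scheme above does not immediately produce $\|\Pi_{\T_1}f_1\|_2$. The cleanest resolution I would seek is a Bessel-type inequality for the nested Walsh packets $\{w_{s_1}\}_{s\in\T_1}$ on a convex $1$-tree, yielding a per-piece bound of the same form and allowing the same summation. Alternatively, one can treat the $\T_1$-contribution separately via the crude bound $|\tr_{\T_1}|\lesssim\sigma_1\sigma_2\sigma_3 |I_\T|$ inherited from Lemma \ref{stelemma} and leverage a symmetric $\sigma_1$-type tops control alongside the assumed $\sigma_2$-type one to absorb the output $\sigma_1\sigma_3\sigma_2^{-1}\|f_2\|_2^2$ into $\sigma_3\|f_1\|_2\|f_2\|_2$ by a geometric-mean argument.
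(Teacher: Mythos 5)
Your route is genuinely different from the paper's. Instead of controlling the $\sigma_1$ factor at the forest level, you try to upgrade the single-tree bound to $\sigma_2\sigma_3|I_\T|^{1/2}\|\Pi_{\T_j}f_1\|_2$ and then Cauchy--Schwarz in the tree index against a quasi-orthogonality of the $\Pi_{\T}$'s. This works cleanly for $j\in\{2,3,4\}$, but, as you anticipate, it stalls at the $\T_1$ piece --- and the difficulty there is not cosmetic. The Bessel inequality you would need, $\sum_{s\in\T_1}|\langle f,w_{s_1}\rangle|^2\lesssim\|\Pi_{\T_1}f\|_2^2$, is false for a spatially branching convex $1$-tree. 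Take $\T$ to be the full tree below a top $s_\T$ with $I_\T=[0,1)$, $\omega_\T=[0,4)$, containing all $4^k$ quartiles at scale $4^{-k}$ for $k=0,\dots,N$ with $\omega_{s_1}\supseteq\omega_\T$ for each, and test against $f=w_{s_{\T,1}}=\cic{1}_{[0,1)}$. For a quartile $q$ at scale $4^{-k}$ one computes $\langle f,w_{(q)_1}\rangle=\pm 2^{-k}$, so each scale contributes exactly $4^k\cdot 4^{-k}=1$ to the Bessel sum; the total is $N+1$ while $\|\Pi_{\T_1}f\|_2^2=1$. Your fallback (crude bound $\sigma_1\sigma_2\sigma_3|I_\T|$ plus a geometric-mean balance against a ``symmetric $\sigma_1$-type tops control'') is exactly where the missing idea lies: the hypothesis of the lemma supplies only a $\sigma_2$-tops bound, and to manufacture a $\sigma_1$-tops bound one has to run Lemma \ref{sizelemma} on $f_1$ itself. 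A second, milder issue is that the quasi-orthogonality $\sum_\T\|\Pi_{\T_j}f_1\|_2^2\lesssim\|f_1\|_2^2$ is not automatic for an arbitrary minimizing forest partition, because disjoint convex trees can still have overlapping shadows.

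The paper's proof stays at the level of the crude estimate $|\tr_\T|\lesssim\sigma_1\sigma_2\sigma_3|I_\T|$ and handles the $\sigma_1$ factor entirely at the forest level by a dichotomy in $n_0:=-2\log(\sigma_2\|f_1\|_2/\|f_2\|_2)$. If $\sigma_1\leq 2^{-n_0/2}$, the given $\tops(\S)\lesssim\sigma_2^{-2}\|f_2\|_2^2$ already closes the bound. Otherwise, iterating Lemma \ref{sizelemma} with respect to $f_1$ splits $\S$ into subforests $\S_n$, $n\leq n_0$, with $\tops(\S_n)\lesssim 2^n\|f_1\|_2^2$ and $\size_{f_1}(\S_n)\leq 2^{-n/2}$ (this is the ``$\sigma_1$-tops control'' you intuited was needed), and then $\sum_{n\le n_0}2^{n/2}\sigma_2\sigma_3\|f_1\|_2^2\lesssim 2^{n_0/2}\sigma_2\sigma_3\|f_1\|_2^2=\sigma_3\|f_1\|_2\|f_2\|_2$. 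This avoids both a Bessel inequality for the nested $\{w_{s_1}\}$ and any quasi-orthogonality across trees. If you carry out your fallback by actually running the size lemma on $f_1$, you will essentially reproduce the paper's argument.
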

\begin{proof}
Choose the minimizing forest $\F$ and write $$\S=\bigcup_{\T\in\F}\T, \qquad  \textrm{with}\; \sum_{\T\in\F}|I_\T|\lesssim \sigma_2^{-2}\|f_2\|^2_2.$$
Let $n_0= -2\log \big(\frac{\sigma_2\|f_1\|_2}{\|f_2\|_2}\big)$. If $\sigma_1 \sigma_2^{-1}  \|f_2\|_2  \leq    \|f_1\|_2  $, in other words if $\size_{f_1}(\S)\le 2^{-\frac{n_0}{2}}$, then use
the single tree estimate \eqref{ste}  to get
$$
|\tr_\S(f_1,f_2,f_3)| \leq   \sum_{\T \in \F} |\tr_\T(f_1,f_2,f_3)| \lesssim   \tops(\S) \sigma_1 \sigma_2  \sigma_3 \lesssim \sigma_1 \sigma_2^{-1} \sigma_3 \|f_2\|_2^2\le \sigma_3 \|f_1\|_2 \|f_2\|_2.
$$

Otherwise, we have $\sigma_1 \geq 2^{-\frac{n_0}{2}}.$ We iteratively apply Lemma \ref{sizelemma} with respect to $f_1$, a finite number of times, and write
$$
\S= \tilde{\S}\cup(\cup_{n\leq n_0} \S_n), \quad \tops(\S_n) \lesssim 2^{n}\|f_1\|_2^2, \quad \size_{f_1}(\S_n) \leq 2^{-\frac{n}{2}}, \quad \size_{f_1}(\tilde{\S})\le 2^{-\frac{n_0}{2}}
$$
Note that we also continue to have
$$\size_{f_j}(\S_n), \size_{f_j}(\tilde{\S})\leq \sigma_j, \,j=2,3.
$$
Then, using  triangle inequality, \eqref{ste} again,  and summing up,
$$
|\tr_{\cup_{n\leq n_0} \S_n}(f_1, f_2,f_3)| \lesssim \sum_{n\leq n_0} 2^{\frac n2 }\sigma_2 \sigma_3 \|f_1\|_2^2 \leq 2^{\frac{n_0}{2}} \sigma_2 \sigma_3 \|f_1\|_2^2 \leq  \sigma_3 \|f_1\|_2 \|f_2\|_2,
$$as claimed.
To deal with the contribution from $\tilde{\S}$, redo the computations from the first case, this time with $\S:=\tilde{\S}$. Lemma \ref{sizelemma} guarantees $\tilde{\S}$ is convex. To see that it is a forest, note that it is the disjoint union of the convex trees $\tilde{\T}:=\T\cap \tilde{\S}$, with $\T\in\F$, each of which is assigned the same top as $\T$. It is then obvious that $\tops(\tilde{\S})\le \tops(\S)$.
\end{proof}

\section{Proofs of Propositions \ref{mainprop} and \ref{logprop}} \label{sect5} We first prove Proposition \ref{mainprop}, and then indicate the necessary changes for Proposition \ref{logprop} at the end of the section. In this proof, we use the notation  $(p_1)'=:q$.

 \subsection{Reductions}\label{ss51} By a limiting argument, it will suffice to replace $\mathbf{Qt}_1$ with a finite convex subset  $\S$. The constants implied by the almost inequality signs are not allowed to depend on $p_1$, $\S$ or $f_1,f_2,f_3$. Recall that we have the separation of scales \eqref{sepscl} for $\S$.
We   show how \eqref{mrwt} follows from
\begin{equation} \label{mainpf}
|\tr_\S(f_1,f_2,f_3)| \lesssim q \|f_1\|_{p_1}  |F_2|^{\frac{1}{p_2}}  , \qquad \forall\,   |f_j| \leq \cic{1}_{F_j}, \; j=2,3,
\end{equation}
whenever $  1\leq|G_3|<4$, which we summarize as $|G_3|\sim 1$ below.

Indeed, if $\lambda$ is  a power of $4$ chosen to have  $|\tilde{G_3}:= \{\lambda^{-1}x: x \in G_3\}|\sim 1$, then $|\tilde{F_2}:=\{\lambda^{-1}x: x \in F_2\}| \sim|G_3|^{-1}|F_2|$. Observe that  $\S_\lambda=\{s_\lambda=\lambda^{-1} I_s \times \lambda  \omega_s: s \in \S\}$ is still a convex subset of $\mathbf{Qt}_1$. If we employ \eqref{mainpf}, we get the existence of $\tilde{F_3} \subset \tilde{G_3},$ with $|\tilde{F_3}|\geq\frac18 |\tilde{G_3}|$ and
\begin{equation} \label{mainpf1}
\big|\tr_{\S_\lambda}\big(\mathrm{Dil}_{\frac1\lambda}^{p_1} f_1,\cic{1}_{\tilde{F_2}},\cic{1}_{\tilde{F_3}}\big)\big| \lesssim q \big\|\mathrm{Dil}^{p_1}_{\frac1\lambda}f_1\big\|_{p_1} |\tilde{F_2}|^{\frac{1}{p_2}}\sim q\|  f_1\|_{p_1} |{F_2}|^{\frac{1}{p_2}} |G_3|^{-\frac{1}{p_2}},
\end{equation}
It follows that $F_3 =\{\lambda y: y \in \tilde{F_3}\} \subset G_3$ is major in $G_3$.
Since by dilation invariance
$$
\big|\tr_{\S }(  f_1,\cic{1}_{ F_2},\cic{1}_{F_3})\big| = \big|\tr_{\S_\lambda}\big(\mathrm{Dil}_{\frac1\lambda}^{p_1} f_1, \lambda^{\frac{1}{p_2}}\cic{1}_{\tilde{F_2}}, \lambda^{ -\frac12}\cic{1}_{\tilde{F_3}}\big)\big| \sim |G_3|^{\frac{1}{p_2}-\frac12} \big|  \tr_{\S_\lambda}\big(\mathrm{Dil}_{\frac1\lambda}^{p_1} f_1,\cic{1}_{\tilde{F_2}},\cic{1}_{\tilde{F_3}}\big)\big|,
$$
\eqref{mrwt} follows by combining  \eqref{mainpf1} with the last display.  

We perform two more reductions. First, by linearity in the first function, it is enough to prove the case $\|f_1\|_{p_1}=1$ of \eqref{mainpf}. Furthermore, we can assume $|F_2| \leq |G_3|<4$. Indeed, if $|F_2| \geq |G_3|\geq 1$ instead, we take $r>p_2$: since the tuple $\big(\frac{1}{p_1},\frac1r,1-\frac{1}{p_1}-\frac{1}{r}\big)$ belongs to  the  range \eqref{hexa-tuples}
we apply the bound \eqref{BHTbdnonLor} (which holds for the discrete model $\Q_\S$ as well), and get
\begin{equation}
\label{red1}
|\tr_\S(f_1,f_2,f_3)| \lesssim q     \|f_1\|_{p_1}|F_2|^{\frac{1}{r}} |F_3|^{1-\frac{1}{p_1}-\frac1r}  \lesssim     |F_2|^{\frac{1}{r}}
\end{equation}
which is stronger than \eqref{mrwt} in the region $|F_2| \geq 1$. The dependence on $p_1$ of the constant in the above inequality follows easily if one tracks it down in the usual proof, given for instance (in the Fourier case) in Chapter 6 of \cite{ThWp}.

Summarizing, we have reduced to proving the following statement: show that for all $f_1$ of unit norm in $L^{p_1}(\R)$, $F_2 \subset \R$, $|F_2| \leq 4$, $G_3\subset \R$ with $|G_3|\sim 1$, there exists a major subset $F_3\subset G_3$ with
\begin{equation} \label{mainpf2}
|\tr_\S(f_1,f_2,f_3)| \lesssim q    |F_2|^{\frac{1}{p_2}}  , \qquad \forall\,   |f_j| \leq \cic{1}_{F_j}, \; j=2,3.
\end{equation}

\subsection{Exceptional sets}
We  define the   exceptional sets
\begin{equation} \label{eset1}
E_1 := \{\M_{p_1} f_1\geq c\}, \qquad E_2:= \{\M_{1}1_{F_2}\geq c |F_2|\}.
\end{equation}
For an appropriate choice of the constant $c$
\begin{equation} \textstyle \label{eset3}|E_1|\lesssim c^{-p_1}\|\M_{p_1} f_1\|_{p_1}^{p_1}\leq \frac14, \qquad |E_2|\lesssim c^{-1} \leq \frac14.
\end{equation}
Set $F_3:= G_3\backslash(E_1 \cup E_2)$; by the above, $|F_3| \geq \frac12 \geq \frac18 |G_3|$.
We note that when $|f_3|\leq \cic{1}_{F_3}$,
\begin{equation} \label{goodtiles}
\tr_\S(f_1,f_2,f_3) = \tr_{\S_\mathsf{good}}(f_1,f_2,f_3) , \qquad \S_\mathsf{good} = \{s \in \S : I_s \cap(E_1 \cup E_2)^c \neq \emptyset\}.
\end{equation}
Therefore we will just replace  $\S$ by $\S_\mathsf{good}$ in what follows. Thus, by construction, in view of the upper bound \eqref{ubsize},
\begin{equation}
\label{initsize}
\size_{f_1}(\S) \leq \sup_{s \in \S}\inf_{ I_s} \M_{p_1} f_1  \lesssim 1, \;\; \size_{f_2}(\S) \leq \sup_{s \in \S}\inf_{  I_s} \M_{1} \cic{1}_{F_2} \lesssim |F_2|, \;\;\size_{f_3}(\S) \leq 1.
\end{equation}
The last bound is a consequence of $f_3$ being uniformly bounded by 1.
\subsection{A multi-frequency Calder\'on-Zygmund decomposition}

Before we proceed with the proof, we present an abstract lemma which involves a multi-frequency Calder\'on Zygmund decomposition. This decomposition was proved  in \cite{NOT} for the Fourier case, and has since then  been successfully used in getting uniform estimates for the quartile operator \cite{OT} and  getting  bounds for the Walsh version of the lacunary Carleson's operator near $L^1$ \cite{DL}. Our lemma is essentially contained in \cite{OT}.
\begin{lemma} \label{CZ} Let $1<p_1<2$ , $q=(p_1)'$  and $\|f_1\|_{p_1}=1$. Define $\alpha=1 -\frac{2}{q}$ and let $\S$ be a finite forest with $\tops(\S)\leq A^2$ for some $A \geq 1$. Then there exists $g_1^\S$ such that
$$
\|g_{1}^\S\|_2 \lesssim   A^{\alpha}, \qquad \tr_{\S}(f_1, f_2,f_3)= \tr_{\S }(g_{1}^\S, f_2,f_3).
$$
\end{lemma}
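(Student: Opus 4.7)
The plan is to implement a multi-frequency Calder\'on-Zygmund decomposition of $f_1$ adapted to the forest $\S$, in the spirit of \cite{NOT}. As $\tr_\S(f_1,f_2,f_3)$ sees $f_1$ only through the pairings $\{\l f_1, w_{s_1}\r : s\in \S\}$, it is enough to construct $g_1^\S$ obeying $\l g_1^\S, w_{s_1}\r = \l f_1, w_{s_1}\r$ for every $s\in \S$ with the $L^2$-estimate $\|g_1^\S\|_2\lesssim A^\alpha$.

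First I would set the threshold $\lambda := A^{-2/q}$ and select the maximal dyadic intervals $\mathcal{B}$ on which $\|f_1\|_{L^{p_1}(B)}>\lambda$. Standard Calder\'on-Zygmund yields $\sum_{B\in\mathcal{B}}|B|\lesssim \lambda^{-p_1}=A^{2(p_1-1)}\le A^2$ and $|f_1|\le\lambda$ a.e.~off $\cup\mathcal{B}$. I would then augment $\mathcal{B}$ with the maximal elements of the tree tops $\{I_\T : \T\in\F\}$ associated to a nearly-optimal tree partition $\F$ of $\S$ (which satisfies $\sum_\T|I_\T|\lesssim \tops(\S)\le A^2$) and take the disjoint maximal hull $\mathcal{Q}$. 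The resulting disjoint family obeys $|\cup\mathcal{Q}|\lesssim A^2$, and by doubling at the unstopped parent of each $Q$, $\|f_1\|_{L^{p_1}(Q)}\lesssim \lambda$ for every $Q\in\mathcal{Q}$.

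Next I would define $g_1^\S := f_1$ off $\cup\mathcal{Q}$, and $g_1^\S|_Q := P_{V_Q}(f_1\cic{1}_Q)$ on each $Q\in\mathcal{Q}$, where $V_Q\subseteq L^2(Q)$ is the subspace spanned by all wave packets $w_{s_1}$ with $s\in\S$ and $I_s\subseteq Q$. Every quartile of $\S$ lies in some $\T\in\F$ whose top $I_\T$ is contained in a unique $Q\in\mathcal{Q}$, so every such $w_{s_1}$ is supported in the matching $Q$ and belongs to $V_Q$; self-adjointness of $P_{V_Q}$ then gives $\l g_1^\S, w_{s_1}\r=\l f_1, w_{s_1}\r$ for all $s\in\S$, which via \eqref{trf} secures $\tr_\S(g_1^\S, f_2, f_3)=\tr_\S(f_1, f_2, f_3)$.

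Finally I would bound $\|g_1^\S\|_2$. On the good set $|g_1^\S|=|f_1|\le\lambda$ a.e., so
\[
\|g_1^\S\cic{1}_{(\cup\mathcal{Q})^c}\|_2^2\le \lambda^{2-p_1}\|f_1\|_{p_1}^{p_1}=\lambda^{2-p_1}\le 1\le A^{2\alpha}.
\]
On each $Q\in\mathcal{Q}$, decomposing $V_Q$ tree by tree via Lemmas~\ref{psp1}--\ref{psp2} and the H\"older--Walsh argument yields $\|P_{V_Q}(f_1\cic{1}_Q)\|_2^2\lesssim \dim(V_Q)\lambda^2|Q|$, while the combinatorial trace-type identity
\[
\sum_{Q\in\mathcal{Q}}\dim(V_Q)|Q|\lesssim \sum_{\T\in\F}|I_\T|\lesssim A^2
\]
produces $\|g_1^\S\cic{1}_{\cup\mathcal{Q}}\|_2^2\lesssim \lambda^2 A^2 = A^{2(1-2/q)}=A^{2\alpha}$. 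The main obstacle is the combined dimension-count and projection-norm estimate at each $Q\in\mathcal{Q}$, i.e.~the essence of the NOT multi-frequency CZ principle; in the Walsh setting it is substantially streamlined by the orthogonality \eqref{walshorth} together with the tile enumeration of Lemmas~\ref{psp1}--\ref{psp2}.
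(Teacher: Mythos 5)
Your plan is right in spirit---approximate $f_1$ by an $L^2$ function that preserves all pairings $\l f_1,w_{s_1}\r$ for $s\in\S$ via a multi-frequency Calder\'on--Zygmund projection---but the choice of projection space breaks the argument at its heart. You project $f_1\cic{1}_Q$ onto $V_Q=\mathrm{span}\{w_{s_1}:s\in\S,\ I_s\subseteq Q\}$, where $\mathcal{Q}$ also contains the tree tops. That space has \emph{uncontrolled} dimension: take $\S$ to be a single $2$-tree $\T$ with $m$ quartiles; then the tiles $\{s_1:s\in\T\}$ are pairwise disjoint, so $\dim(V_{I_\T})=m$ while $\tops(\{\T\})=|I_\T|$ stays fixed, and your asserted ``trace identity'' $\sum_Q\dim(V_Q)|Q|\lesssim\sum_\T|I_\T|$ is simply false. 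Worse, $V_Q$ is spanned by wave packets at many different scales, with $\|w_{s_1}\|_\infty=|I_s|^{-1/2}$ varying without bound; the Bernstein inequality that underlies the NOT estimate requires the spanning packets to satisfy $\|\cdot\|_\infty\lesssim|Q|^{-1/2}$ uniformly, and this fails for $|I_s|\ll|Q|$. (A secondary slip: the multi-frequency CZ bound carries the exponent $\alpha$ on the dimension, i.e.\ $\|P_V(f\cic{1}_Q)\|_2^2\lesssim(\dim V)^{\alpha}\lambda^2|Q|$, not exponent $1$.)

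The paper projects at the opposite end of the scale range. It takes $\cic{I}$ to be the children of the maximal dyadic intervals inside $E_1=\{\M_{p_1}f_1\ge c\}$, and exploits the hypothesis (implicit in the statement, coming from the $\S_\mathsf{good}$ reduction of \eqref{goodtiles}) that no $I_s$ with $s\in\S$ lies inside $E_1$: hence $I_s\cap I\neq\emptyset$ forces $I_s\supsetneq I^*$. On each $I$ all wave packets $w_{s'_j}$ from $\S$ therefore collapse to the single scale $|I|$, with $w_{s'_j}\cic{1}_I\in H_{\S_I}$ for a collection $\S_I$ of tiles at that one scale satisfying $\#\S_I\le N_I$, the local value of the counting function $N=\sum_{\T\in\F}\cic{1}_{I_\T}$ (which is constant on $I$), because distinct tiles in $\S_I$ must carry the top frequencies of distinct trees through $I$. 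Single-scale Bernstein then gives $\|g_I\|_{L^2(I)}\lesssim N_I^{\alpha/2}$, and H\"older in $N$ finishes: $\sum_I N_I^\alpha|I|\le\|N\|_1^\alpha|E_1|^{1-\alpha}\lesssim A^{2\alpha}$. So the stopping family must be driven solely by $E_1$ (not by the tree tops), sitting \emph{below} every $I_s$; that is what makes the local dimension small and the Bernstein constant uniform---precisely what your construction at tree-top scale cannot deliver.
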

\begin{proof}
Choose a  minimizing $\F$ and define the counting function $N  =\sum_{\T \in \F} \cic{1}_{I_\T}$.
Let $I \in \cic{I}$ be the collection of the children of the  maximal dyadic intervals inside  $E_1= \{\M_{p_1} f_1\ge c\}$. The intervals $I$ are pairwise disjoint, and
\begin{equation} \label{Cz1}
\sum_{I \in \cic{I}} |I| \leq |E_1| \leq 1, \qquad \| f_1\|_{L^{p_1}(I)} \lesssim 1.
\end{equation}
Note also that, as a consequence of \eqref{goodtiles},
$$
s \in \S,  I_s \cap I \neq \emptyset \implies I_s \supsetneq I^*,
$$
where $I^*$ is the dyadic parent of $I$.
This implies that the counting function $N$ is constant equal to $N_I$ on $I$.
We introduce the collection of tiles for each $I\in \cic{I}$
$$
\S_I = \{s \in \mathbf{Ti}: I_s = I, s \ll s'_j  \textrm{ for some } s' \in \S, \, j=1,\ldots,4\}.
$$
Observe that if $s \in \S_I$ and $s'$ is a quartile in $\S$ with $s\ll s'_1$, then $s \ll s'_j $ for $j=2,\ldots,4 $ too. This is because $|I_{s'}| \geq 4 |I|$, by the above observations. As a first consequence, $w_{s'_j}$ is a scalar multiple of $w_s$ on $I$, and  in particular
\begin{equation} \label{span}
w_{s'_j} \cic{1}_I \in H_{\S_I} \qquad \forall s' \in \S.
\end{equation}

 As a further consequence, $\#\S_I \leq N_I$, since each $\omega_s$ with $s \in \S_I$ contains the frequency component  of some of the $N_I$ trees whose top spatial interval contains $I$.
Let $v=\sum_{s \in \S_I} a_s w_s \in H_{\S_I}$. We see that, for all $q \geq 2$,
 \begin{equation} \label{gi4}
\|v\|_{L^q(I)} \leq \|v\|_{L^2(I)}^{\frac2q}  \|v\|_{L^\infty(I)}^{1-\frac2q} \leq \|v\|_{L^2(I)}^{\frac2q}\Big(  N_I^\frac12\textstyle\frac {\textstyle\big[\sum_{s \in \S_I} |a_s|^2\big]^\frac12}{\sqrt{|I|}} \Big)^{ 1-\frac2q} = N_I^{\frac12-\frac1q} \|v\|_{L^2(I)}.
\end{equation}
With this observation and the second half of \eqref{Cz1},
$$
|(f_1 ,v)_{L^2( I)}|   \leq   \|f_1\|_{L^{p_1}(I)} \|v\|_{L^{q}(I)} \lesssim N_I^{\frac\alpha2 }\|v\|_{L^2(I)} .
$$
By the Riesz representation theorem, we get the existence of  $g_I \in H_{\S_I}$, with \begin{equation} \label{gi}  \quad \|g_I\|_{L^2(I)}  \lesssim N_I^{\frac\alpha2}, \quad
(f_1,v)_{L^2(I)}= (g_I,v)_{L^2(I)} \; \forall v \in H_{\S_I}.  \end{equation}
We set
$$
g_{1}^\S = f_1 - \sum_{I \in \cic{I}}\big( f_1\cic{1}_I - g_I\big).
$$
In particular, $g_1^\S =f_1$ outside of $E_1$. Thus, for every $ s \in \S,$ $ j=1,\ldots,4$, in view of \eqref{span}
$$
\l f_1, w_{s_j} \r = \l g_1^\S \cic{1}_{E_1^c}, w_{s_j} \r + \sum_{I \in \cic{I}} \l f_1, w_{s_j}\cic{1}_I \r = \l g_1^\S \cic{1}_{E_1^c}, w_{s_j} \r + \sum_{I \in \cic{I}} \l g_I, w_{s_j}\cic{1}_I \r  = \l g_1^\S  , w_{s_j} \r,
$$and it follows immediately that
\begin{equation} \label{gi1}
\tr_{\S }(f_1,f_2,f_3) = \tr_{\S }(g_{1}^\S ,f_2,f_3),
\end{equation}
Since $|f_1| \lesssim 1$ on $E_1^c$ and $p_1<2,$
 \begin{equation} \label{gi2}
\|g_{1}^\S \cic{1}_{(E_1)^c}\|_{2}^2 = \int_{E_1^c} |f_1|^2 \, \d x \lesssim  \int_{E_1^c} | f_1|^{p_1} \, \d x \lesssim  \|f_1\|_{p_1}^{p_1} =1,
\end{equation}
while
 \begin{align} \label{gi3}
\|g_{1}^\S \cic{1}_{ E_1 }\|_{2}^2 &=\sum_{I \in \cic{I}}|I| \|g_I\|_{L^2(I)}^2  \lesssim\sum_{I \in \cic{I}}N_I^\alpha|I| = \int_{\cup_{I \in \cic{I}} I} N^\alpha    \\&   \leq\|N\|_{1}^\alpha \Big|\Big(\bigcup_{I \in \cic{I}} I\Big)\Big|^{1-\alpha} \leq A^{2\alpha}. \nonumber
\end{align}
In the last step,  we have used
  the bound on the sum of the $|I|$. Collecting \eqref{gi1}-\eqref{gi3}, we are done.
\end{proof}
\subsection{Proof of \eqref{mainpf2}}  Iterating the size Lemma \ref{sizelemma} with respect to $f_2$ and $f_3$ and then intersecting the forests, and recalling \eqref{initsize}, we decompose
$$
\S=   \bigcup \big\{\S_{n_2,n_3}: \; {2^{-n_2} \leq|F_2|^{\frac{1}{2}} }, 2^{-n_3} \leq 1\big\} $$ with each $\S_{n_2,n_3}$ being a convex forest such that
\begin{align*}&
\tops(\S_{n_2,n_3}) \lesssim \min\{ 2^{2n_2} ,2^{2n_3} \} ,\\ &\size_{f_1}(\S_{n_2,n_3}) \lesssim 1, \quad \size_{f_2}(\S_{n_2,n_3}) \lesssim 2^{-n_2}|F_2|^{\frac12} , \quad \size_{f_3}(\S_{n_2,n_3})) \leq 2^{-n_3}.
\end{align*}
We use Lemma \ref{CZ} for each  $\S_{n_2,n_3}$, obtaining  functions $g_{1}^{\S_{n_2,n_3}}$ satisfying\begin{equation}
\|g_{1}^{\S_{n_2,n_3}}\|_2 \lesssim \min\{2^{\alpha n_2}, 2^{\alpha n_3}\}
\end{equation}
and
 \begin{equation}
\tr_{\S_{n_2,n_3}}(f_1,f_2,f_3)=  \tr_{\S_{n_2,n_3}}\big(g_{1}^{\S_{n_2,n_3}},f_2,f_3\big).
\label{equality}
\end{equation}
Recall here that $\alpha= 1-\frac{2}{q}$.
Applying \eqref{sferef} of Lemma \ref{lemmasfe} (or the analogous estimate where $2$ and $3$ are permuted), we obtain the forest estimates
\begin{equation} \label{sfec}
 |\tr_{\S_{n_2,n_3}}(f_1, f_2,f_3)|=\big|  \tr_{\S_{n_2,n_3}}\big(g_{1}^{\S_{n_2,n_3}},f_2,f_3\big)\big| \lesssim |F_2|^{\frac12 }  \begin{cases} 2^{-n_3}2^{\alpha n_2} \displaystyle  &  2^{n_2} \leq 2^{n_3} \\ \displaystyle2^{-n_2}2^{\alpha n_3}   &  2^{n_3} \leq 2^{n_2} . \end{cases}
\end{equation}
We used that $\|f_2\|_2 \leq |F_2|^{\frac12} $ and $\|f_3\|_2 \leq |F_3|^{\frac12}\sim 1$. We now use the triangle inequality over the forests $\S_{n_2,n_3}$. This yields the estimate
\begin{equation}
 |\tr_{\S}(f_1, f_2,f_3)| \lesssim \sum_{\substack{2^{-n_2} \leq |F_2|^{\frac12} \\ 2^{-n_3} \leq 1}}  |\tr_{\S_{n_2,n_3}}(f_1, f_2,f_3)|.
\end{equation}
We split the above sum into two separate regimes.
\vskip0.7mm \noindent $\bullet$ \textsc{Regime} $R_2=\{2^{-n_2} \leq |F_2|^{\frac12}, 1 \leq 2^{n_3} \leq 2^{n_2}\}$.
\vskip0.7mm \noindent
We use the second forest estimate in \eqref{sfec}, and obtain
\begin{align*}
\sum_{(n_2,n_3) \in R_2 }   |\tr_{\S_{n_2,n_3}}(f_1, f_2,f_3)| \
 &\lesssim |F_2|^{\frac12} \sum_{2^{-n_2} \leq |F_2|^{\frac12}} 2^{-n_2}\sum_{1\leq 2^{n_3} \leq 2^{n_2}} 2^{\alpha n_3}  \\ & \lesssim  |F_2|^{\frac12} \sum_{2^{-n_2} \leq|F_2|^{\frac12}} 2^{-n_2(1-\alpha)}   \\ & \lesssim   q  |F_2|^{\frac12+\frac{1}{q}} =q |F_2|^{\frac{1}{p_2}}. \end{align*}
Here we used that
\begin{equation}
\sum_{n \geq n_0} 2^{-(1-\alpha) n} \sim   \frac{1}{1-\alpha} 2^{-(1-\alpha) n_0} \sim q |F_2|^{\frac{1}{q}},
\end{equation} for $n_0 = -\frac12 \log |F_2|$ and that
\begin{equation} \textstyle
 \frac{1}{2}+ \frac{1}{q} = \frac32 -\frac{1}{p_1}= \frac{1}{p_2}.
\end{equation}
\vskip0.7mm \noindent $\bullet$ \textsc{Regime} $R_3=\{  |F_2|^{-\frac12} \leq 2^{n_2} \leq 2^{n_3}\}$.
\vskip0.7mm \noindent
We use the first forest estimate in \eqref{sfec}, and get
\begin{align*}
\sum_{(n_2,n_3) \in R_3 }   |\tr_{\S_{n_2,n_3}}(f_1, f_2,f_3)| \
 &\lesssim |F_2|^{\frac12} \sum_{2^{-n_3} \leq |F_2|^{\frac12}} 2^{-n_3}\sum_{|F_2|^{-\frac12} \leq 2^{n_2} \leq 2^{n_3}} 2^{\alpha n_2}  \\ & \lesssim  |F_2|^{\frac12} \sum_{2^{-n_3} \leq  |F_2|^{\frac12}} 2^{-n_3(1-\alpha)}   \\ & \lesssim     q|F_2|^{\frac12+\frac{1}{q}} = q|F_2|^{\frac{1}{p_2}}.
 \end{align*}
Combining the last two displays, we obtain exactly \eqref{mainpf2}, and in turn, the theorem.
\subsection{Proof of Proposition \ref{logprop}}
By the standard argument that converts generalized restricted weak type estimates into weak type (see the proof of \eqref{mrwtmain} given \eqref{mrwt}), \eqref{wtelog} follows if we show that  for all subsets $\S\subset \mathbf{Qt}$,   $f_1 \in L^p(\R)$, $f_2 \in L^2(\R),$ and $G_3\subset \R$ there exists a major subset $F_3 \subset G_3$ such that for all functions $f_3$ with  $|f_3|\leq \cic{1}_{F_3}$ \begin{equation} \label{rwtlog}
|\tr_\S(f_1,f_2,f_3)| \lesssim p' \|f_1\|_p \| f_2\|_2 |G_3|^{1-\frac1r}, \qquad 1<p<2.
\end{equation}
Performing the same reductions as in Subsection \ref{ss51} (i.e.\ using linearity and dyadic scaling) it further suffices to assume $\|f_1\|_p=\|f_2\|_2=1$, $ 1 \leq |G_3|<4$. 

The first step is yet again eliminating the two exceptional sets
$$
|E_1 := \{\M_p f_1 \geq c  \}| \leq \frac14   \qquad |E_2:= \{\M_2 f_2 \geq c  \}| \leq \frac14
$$
with $c>0$ suitably chosen, so that  $F_3:= G_3 \backslash(E_1 \cup E_2)$ is a major subset of $G_3$. Arguing just like we did at the end of Subsection 5.2, we arrive at the  following starting point for the size decomposition:
\begin{equation}
\label{initsize2}
\size_{f_1}(\S)\leq c, \qquad \size_{f_2}(\S) \leq   c, \qquad \size_{f_3}(\S) \leq 1.
\end{equation}
We now decompose $\S$ by means of the size lemma with respect to   $f_2,f_3$:
$$
\S=   \bigcup \big\{\S_{n_2,n_3}: \; 2^{-n_2}, 2^{-n_3} \leq 1\big\} $$ with
\begin{align*}&
\tops(\S_{n_2,n_3})=\sum_{\T\in \F_{n_2,n_3}}|I_\T| \lesssim 2^{2n_\circ} ,\quad n_\circ:=\min\{n_2,n_3\},\\ &\size_{f_1}(\S_{n_2,n_3}) \lesssim 1, \quad \size_{f_2}(\S_{n_2,n_3}) \lesssim 2^{-n_2}, \quad \size_{f_3}(\S_{n_2,n_3}) \leq 2^{-n_3}.
\end{align*} Applying Lemma \ref{CZ} to $f_{1}$ relatively to each collection $\S_{n_{2},n_{3}}$, we get  the existence of a function $g_{1}^{\S_{n_2,n_3}}$ with $$
\|g_{1}^{\S_{n_2,n_3}} \|_{2} \lesssim 2^{\left(1-\frac{2}{p'}\right)n_\circ}, $$ and $\tr_{\S_{n_2,n_3}}(f_1,f_2,f_3)=  \tr_{\S_{n_2,n_3}}\big(g_{1}^{\S_{n_2,n_3}},f_2,f_3\big)$.
Applying Lemma \ref{lemmasfe}, the forest estimate then becomes (denoting $n_\star:=\max\{n_2,n_3\}$)
$$
|\tr_{\S_{n_2,n_3}}(f_1, f_2,f_3)| \lesssim \size_{f_\star}(\S_{n_2,n_3}) \|g_{1}^{\S_{n_2,n_3}}\|_2 \|f_\circ\|_2 \lesssim  2^{-n_\star} 2^{\left(1-\frac{2}{p'}\right)n_\circ}.
$$
We have tacitly used that $\|f_2\|_2=1, 
\|f_3\|_2\leq |F_3|^{\frac12}\lesssim 1.$
Finally, we use the triangle inequality to estimate
\begin{align*}
|\tr_{\S}(f_1, f_2,f_3)| & \leq \Big(\sum_{1 \leq 2^{n_2} \leq 2^{n_3}} + \sum_{1 \leq 2^{n_3} \leq 2^{n_2}} \Big) |\tr_{\S_{n_2,n_3}}(f_1, f_2,f_3)| \\ & \lesssim \sum_{2^{n_2} \geq 1} 2^{-n_2}  \sum_{1 \leq 2^{n_3} \leq 2^{n_2}} 2^{n_3 \left(1-\frac{2}{p'}\right)} + \sum_{2^{n_3} \geq 1} 2^{-n_3} \sum_{1 \leq 2^{n_2} \leq 2^{n_3}} 2^{n_2\left(1-\frac{2}{p'}\right)} \\   & \lesssim \sum_{  2^{n_2}\geq 1} 2^{-\frac{2}{p'}n_2} + \sum_{  2^{n_3}\geq 1} 2^{-\frac{2}{p'}n_3} \lesssim p',
\end{align*}which yields  (the scaled version of) \eqref{rwtlog}
The proof of \eqref{wtelog} is thus complete.

\section{Concluding remarks}
There are two immediate directions for improvement of the results contained in Section \ref{main}. The first one is of technical nature, and consists in extending all the results here to the Fourier analogue of the quartile operator, and thus ultimately to $\mathsf{BHT}$. The main difficulty lies in recasting efficiently the multi-frequency Calder\'on-Zygmund decomposition of Lemma \ref{CZ} in the Fourier case: while suitable analogues of \eqref{gi4} hold true, the insertion of the phase space projection as in \eqref{equality} creates a nontrivial error term which must be carefully estimated. We plan on dealing with this issue in a future work.

A second, probably   harder and more interesting task is upgrading Theorem \ref{lorentzthm} to the full strength of the conjectured \eqref{cjt}. The techniques  employed here do not seem to work when both functions in \eqref{mrwtmain} are taken to be unrestricted. This case is intrinsically more difficult, since two of the three functions in $\tr_\S$ are not in $L^2$ locally and  thus they both  need to be treated by the multi-frequency Calder\'on-Zygmund decomposition. A very appealing situation  is the  symmetric case
\begin{equation}
\label{polar}
\V_\S: L^{\frac43}(\R) \times L^{\frac43}(\R)  \to L^{\frac23,\infty}(\R).
\end{equation}
To prove this bound, it will be enough to show that for any $F\subset \R$ with $|F|=1$ and $\|f\|_{\frac43}=1$ there exists a major subset $G$ of $F$ such that
\begin{equation}
\label{polar1}
|\tr_\S (f,f,g)| \lesssim 1,\text{ when } |g| \leq \cic{1}_{G}.
\end{equation}
Indeed, \eqref{polar} for arbitrary $f_1,f_2$ will then follow by applying \eqref{polar1} to $f\in\{f_1,f_2,f_1-f_2\}$. It is tempting to believe that \eqref{polar1} should be the easiest case since the same multi-frequency Calder\'on-Zygmund decomposition would simultaneously resolve both components.

\subsection*{Acknowledgments} We thank Dimitry Bilyk for valuable discussions and pointers to preexisting literature. We would also like to thank the very careful referee who has pointed out some inaccuracies in the first version of the manuscript.

\bibliography{12100}{}

\providecommand{\bysame}{\leavevmode\hbox to3em{\hrulefill}\thinspace}
\providecommand{\MR}{\relax\ifhmode\unskip\space\fi MR }
\providecommand{\MRhref}[2]{%
  \href{http://www.ams.org/mathscinet-getitem?mr=#1}{#2}
}
\providecommand{\href}[2]{#2}
\begin{thebibliography}{10}

\bibitem{ANTO}
N.~Yu. Antonov, \emph{Convergence of {F}ourier series}, Proceedings of the {XX}
  {W}orkshop on {F}unction {T}heory ({M}oscow, 1995), vol.~2, 1996,
  pp.~187--196. \MR{1407066 (97h:42005)}

\bibitem{ADR}
J.~Arias-de Reyna, \emph{Pointwise convergence of {F}ourier series}, J. London
  Math. Soc. (2) \textbf{65} (2002), no.~1, 139--153. \MR{1875141
  (2002k:42009)}

\bibitem{BG}
Dmitriy Bilyk and Loukas Grafakos, \emph{Distributional estimates for the
  bilinear {H}ilbert transform}, J. Geom. Anal. \textbf{16} (2006), no.~4,
  563--584. \MR{2271944 (2007j:46033b)}

\bibitem{CCM}
Mar{\'{\i}}a Carro, Leonardo Colzani, and Gord Sinnamon, \emph{From restricted
  type to strong type estimates on quasi-{B}anach rearrangement invariant
  spaces}, Studia Math. \textbf{182} (2007), no.~1, 1--27. \MR{2326489
  (2008f:46033)}

\bibitem{CGMS}
Mar{\'{\i}}a~Jes{\'u}s Carro, Loukas Grafakos, Jos{\'e}~Mar{\'{\i}}a Martell,
  and Fernando Soria, \emph{Multilinear extrapolation and applications to the
  bilinear {H}ilbert transform}, J. Math. Anal. Appl. \textbf{357} (2009),
  no.~2, 479--497. \MR{2557660 (2010k:44008)}

\bibitem{DL}
Yen Do and Michael Lacey, \emph{On the convergence of lacunary
  {W}alsh-{F}ourier series}, preprint arXiv:1101.2461 [math.CA].

\bibitem{LT1}
Michael Lacey and Christoph Thiele, \emph{{$L^p$} estimates on the bilinear
  {H}ilbert transform for {$2<p<\infty$}}, Ann. of Math. (2) \textbf{146}
  (1997), no.~3, 693--724. \MR{1491450 (99b:42014)}

\bibitem{LT2}
\bysame, \emph{On {C}alder\'on's conjecture}, Ann. of Math. (2) \textbf{149}
  (1999), no.~2, 475--496. \MR{1689336 (2000d:42003)}

\bibitem{Lac}
Michael~T. Lacey, \emph{The bilinear maximal functions map into {$L^p$} for
  {$2/3<p\leq1$}}, Ann. of Math. (2) \textbf{151} (2000), no.~1, 35--57.
  \MR{1745019 (2001b:42015)}

\bibitem{NOT}
Fedor Nazarov, Richard Oberlin, and Christoph Thiele, \emph{A
  {C}alder\'on-{Z}ygmund decomposition for multiple frequencies and an
  application to an extension of a lemma of {B}ourgain}, Math. Res. Lett.
  \textbf{17} (2010), no.~3, 529--545. \MR{2653686 (2011d:42047)}

\bibitem{OT}
Richard Oberlin and Christoph Thiele, \emph{New uniform bounds for a {W}alsh
  model of the bilinear {H}ilbert transform}, preprint arXiv:1004.4019v1
  [math.CA].

\bibitem{Th1}
Christoph Thiele, \emph{The quartile operator and pointwise convergence of
  {W}alsh series}, Trans. Amer. Math. Soc. \textbf{352} (2000), no.~12,
  5745--5766. \MR{1695038 (2001b:42007)}

\bibitem{ThWp}
\bysame, \emph{Wave packet analysis}, CBMS Regional Conference Series in
  Mathematics, vol. 105, Published for the Conference Board of the Mathematical
  Sciences, Washington, DC, 2006. \MR{2199086 (2006m:42073)}

\bibitem{Ththesis}
Christoph~Martin Thiele, \emph{Time-frequency analysis in the discrete phase
  plane}, ProQuest LLC, Ann Arbor, MI, 1995, Thesis (Ph.D.)--Yale University.
  \MR{2692998}

\end{thebibliography}
\bibliographystyle{amsplain}
\end{document}